\title{Algebraic curves admitting inner and outer Galois points} 
\author{Satoru Fukasawa}
\subjclass[2010]{14H50, 14H05, 14H37}
\keywords{Galois point, plane curve, Galois group, automorphism group}
\address{Department of Mathematical Sciences, Faculty of Science, Yamagata University, Kojirakawa-machi 1-4-12, Yamagata 990-8560, Japan} 
\email{s.fukasawa@sci.kj.yamagata-u.ac.jp}
\thanks{The author was partially supported by JSPS KAKENHI Grant Number JP19K03438.}
\newtheorem{theorem}{Theorem}
\newtheorem{proposition}{Proposition}
\newtheorem{lemma}{Lemma}
\theoremstyle{definition}
\newtheorem{remark}{Remark}
\begin{document}
\begin{abstract}
There are two purposes in this article.  
One is to present a criterion for the existence of a birational embedding into a projective plane with inner and outer Galois points for algebraic curves. 
Another is to classify plane curves of degree $d$ admitting an inner Galois point $P$ and an outer Galois point $Q$ with $G_{P}G_{Q}=G_P \rtimes G_Q$ or $G_P \ltimes G_Q$, under the assumption that the characteristic $p$ is zero or $p$ does not divide $d-1$. 
\end{abstract}

\maketitle 

\section{Introduction} 
In 1996, Hisao Yoshihara introduced the notion of a Galois point for a plane curve $C \subset \mathbb{P}^2$ over an algebraically closed field $k$ of characteristic $p \ge 0$ (\cite{fukasawa1, miura-yoshihara, yoshihara}): a point $P \in \mathbb{P}^2$ is called a Galois point, if the extension $k(C)/\pi_P^*k(\mathbb{P}^1)$ of function fields induced by the projection $\pi_P$ from $P$ is a Galois extension. 
The associated Galois group is denoted by $G_P$. 
Furthermore, a Galois point $P$ is said to be inner (resp. outer), if $P \in C \setminus {\rm Sing}(C)$ (resp. if $P \in \mathbb{P}^2 \setminus C$). 
In this article, we investigate (possibly singular) plane curves $C$ in the case where there exist an inner Galois point $P$ and an outer Galois point $Q$. 

Since we do not assume that the smoothness of $C$, sometimes we start with a (reduced, irreducible) smooth projective curve $X$. 
In this case, $C$ is the image of some morphism $\varphi: X \rightarrow \mathbb{P}^2$, which is birational onto its image. 

In \cite{fukasawa3}, the present author presented a criterion for the existence of a birational embedding with two Galois points $P$ and $Q$. 
In this criterion, it is assumed that both points $P$ and $Q$ are inner, or outer.   
In this article, we consider the case where $P$ is inner and $Q$ is outer.

\begin{theorem} \label{main1} 
Let $G_1$ and $G_2$ be finite subgroups of ${\rm Aut}(X)$, and let $P \in X$. 
Then the two conditions
\begin{itemize}
\item[(a)] $X/{G_1} \cong \Bbb P^1$ and $X/G_2 \cong \mathbb{P}^1$, and    
\item[(b)] there exists $\eta \in G_2$ such that $P+\sum_{\sigma \in G_1} \sigma (\eta(P))=\sum_{\tau \in G_2} \tau (P)$
\end{itemize}
are satisfied, if and only if there exists a birational embedding $\varphi: X \rightarrow \mathbb P^2$ of degree $|G_2|$ and an outer Galois point $Q \in \mathbb{P}^2\setminus \varphi(X)$ exists for $\varphi(X)$ such that $\varphi(P)$ is an inner Galois point, and $G_{\varphi(P)}=G_1$ and $G_Q=G_2$. 
\end{theorem}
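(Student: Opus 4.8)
The plan is to pass to function fields and realize the two projections as the quotient maps by $G_1$ and $G_2$. Write $d=|G_2|$. If $\varphi(P)$ is inner and $Q$ outer, then projection from $\varphi(P)$ has degree $d-1$ and projection from $Q$ has degree $d$, so I expect $|G_1|=d-1$; indeed, comparing degrees of the two sides of (b) forces $1+|G_1|=|G_2|$, which I would record at the outset. Choosing coordinates so that $\varphi(P)=(0:1:0)$ and $Q=(0:0:1)$, the two projections become the rational functions $x_1:=\pi_{\varphi(P)}\circ\varphi$ and $x_2:=\pi_Q\circ\varphi$ on $X$, and condition (a) is exactly the statement that $k(X)/k(x_1)$ and $k(X)/k(x_2)$ are Galois with groups $G_1$ and $G_2$; here $x_i$ generates the fixed field $k(X)^{G_i}=k(X/G_i)=k(\mathbb{P}^1)$.

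For the ``only if'' direction I would assume the embedding exists. Condition (a) is then immediate, since the two projections exhibit $X/G_1\cong\mathbb{P}^1$ and $X/G_2\cong\mathbb{P}^1$. To obtain (b) I would pull back the unique line $\ell=\overline{\varphi(P)\,Q}$ through both Galois points and compute $\varphi^{*}\ell$ in two ways. Since $\ell\ni Q$, the divisor $\varphi^{*}\ell$ is a fiber of the $G_2$-cover containing $P$, hence equals $\sum_{\tau\in G_2}\tau(P)$, the right-hand side of (b). Since $\ell\ni\varphi(P)$, the same divisor decomposes along the $G_1$-projection as $P$ (contributed by the smooth point $\varphi(P)$) plus a fiber of the $G_1$-cover; as every point of that fiber already occurs in the $G_2$-orbit, any one of them is $\eta(P)$ for some $\eta\in G_2$, and the fiber is $\sum_{\sigma\in G_1}\sigma(\eta(P))$. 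Equating the two expressions yields (b).

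For the ``if'' direction I would assume (a) and (b), set $\beta:=x_2(P)$ and $\gamma:=x_1(\eta(P))$, and define
\[
\varphi:=\left(1:\frac{1}{x_2-\beta}:\frac{1}{x_1-\gamma}\right):X\longrightarrow\mathbb{P}^2.
\]
Birationality is clean: $[k(X):k(x_1)]=d-1$ and $[k(X):k(x_2)]=d$ are coprime, so $k(x_1,x_2)=k(X)$ and $\varphi$ is birational onto its image. The three functions lie in $L(D)$ for $D:=x_2^{-1}(\beta)=\sum_{\tau}\tau(P)$ of degree $d$, and the linear system is base-point-free, so $\deg\varphi(X)=d=|G_2|$. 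Projection from $(0:0:1)$ recovers $x_2$ and projection from $(0:1:0)=\varphi(P)$ recovers $x_1$, giving the Galois groups $G_2$ and $G_1$; comparing degrees gives $\mathrm{mult}_{\varphi(P)}=d-(d-1)=1$, so $\varphi(P)$ is smooth, i.e. inner.

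The main obstacle is showing $Q=(0:0:1)\notin\varphi(X)$, i.e. that $Q$ is genuinely outer, and this is precisely where (b) is used. A point maps to $(0:0:1)$ only along $x_1^{-1}(\gamma)$, and by (b) this fiber equals $D-P$, on which $x_2\equiv\beta$; comparing orders of vanishing, (b) forces $\mathrm{ord}_R(x_1-\gamma)=\mathrm{ord}_R(x_2-\beta)$ at every $R\neq P$, so $(x_1-\gamma)/(x_2-\beta)$ is a nonvanishing unit there and no such $R$ maps to $(0:0:1)$; at $R=P$ the two orders differ by $1$ and one checks directly that $\varphi(P)=(0:1:0)$. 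Thus the delicate points I anticipate are this ramification-matching and the parallel multiplicity bookkeeping in the decomposition of $\varphi^{*}\ell$ in the converse, where one must track the contact of $\overline{\varphi(P)Q}$ at the possibly ramified point $P$.
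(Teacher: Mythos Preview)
Your proposal is correct and follows essentially the same route as the paper: for the forward direction you pull back the line $\overline{\varphi(P)Q}$ and compute $\varphi^*\ell$ both as the $G_2$-fiber $\sum_\tau\tau(P)$ and as $P$ plus a $G_1$-fiber $\sum_\sigma\sigma(\eta(P))$, and for the converse you build $\varphi$ from generators of the two fixed fields with prescribed pole divisors and use $\gcd(|G_1|,|G_2|)=1$ for birationality. The only substantive difference is that you verify $Q\notin\varphi(X)$ by an explicit order-matching along $x_1^{-1}(\gamma)$, whereas the paper leaves this implicit in the degree count $\deg\varphi(X)=|G_2|=[k(X):k(g)]$, which already forces the projection center $Q$ off the curve.
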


In \cite{fukasawa-speziali}, the present author and Speziali classified plane curves admitting two outer Galois points $P_1$ and $P_2$ with $G_{P_1}G_{P_2}=G_{P_1} \rtimes G_{P_2}$. 
The second purpose in this article is to classify $C$ admitting an inner Galois point $P$ and an outer Galois point $Q$ with $G_PG_Q=G_P \rtimes G_Q$ or $G_P \ltimes G_Q$. 
The case where $G_PG_Q=G_P \rtimes G_Q$ is determined as follows. 

\begin{theorem} \label{main2}
Let $C$ be a plane curve of degree $d \ge 3$. 
Assume that $p=0$ or $d-1$ is prime to $p$. 
Then the following conditions are equivalent. 
\begin{itemize}
\item[(a)] There exist an inner Galois point $P$ and an outer Galois point $Q$ such that $G_PG_Q=G_P \rtimes G_Q$; 
\item[(b)] There exist an inner Galois point $P$ and an outer Galois point $Q$ such that $G_PG_Q=G_P\times G_Q$; 
\item[(c)] $C$ is in one of the following cases: 
\begin{itemize}
\item{} $p=0$ or $d$ is prime to $p$, and $C$ is projectively equivalent to the plane curve defined by $x^{d-1}+y^d+c=0$, where $c=0$ or $1$; 
\item{} $p>0$, $d$ is divisible by $p$, and $C$ is projectively equivalent to the plane curve defined by $x^{d-1}+(\sum_{m \ | \ p^i-1}\alpha_i y^{p^i})^m+c=0$, where $d=p^em$, $m$ is not divisible by $p$, $\alpha_e\alpha_0 \ne 0$, and $c=0$ or $1$.  
\end{itemize}
\end{itemize}
\end{theorem}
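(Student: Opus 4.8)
The plan is to prove Theorem \ref{main2} by establishing the cycle (a) $\Rightarrow$ (c) $\Rightarrow$ (b) $\Rightarrow$ (a), where the implication (b) $\Rightarrow$ (a) is trivial since a direct product is a special case of a semidirect product. The heart of the argument is (a) $\Rightarrow$ (c), which produces the explicit normal forms, and the converse (c) $\Rightarrow$ (b), which verifies that the listed curves actually realize a direct-product configuration of Galois groups.

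For (a) $\Rightarrow$ (c), I would start from the group-theoretic hypothesis $G_PG_Q = G_P \rtimes G_Q$ with $G_P$ normal. Since $P$ is inner and $Q$ is outer, standard facts about Galois points give $|G_P| = d-1$ and $|G_Q| = d$. Because $G_P$ is normal in $G_PG_Q$, the group $G_Q$ acts by conjugation on $G_P$, and crucially the product group fixes the configuration of the two distinguished points; I would first extract the geometric consequence that $Q$ lies on some distinguished line and that the projection structure forces coordinates in which both projections are monomial-like. The key normalization step is to choose coordinates so that $P = (0:0:1)$ (inner) and $Q = (1:0:0)$ or $(0:1:0)$ (outer), with the projection from $P$ being a map to which the cyclic-or-not structure of $G_P$ of order $d-1$ applies. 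Here the hypothesis that $p = 0$ or $p \nmid d-1$ is essential: it guarantees that $G_P$ has order prime to $p$, hence the $G_P$-action is linearizable/diagonalizable, so after coordinate change $G_P$ acts as scalar multiplications $y \mapsto \zeta y$ on a suitable affine coordinate, forcing a term $x^{d-1}$ and making the defining equation a function of $x$ and $y^{d-1}$ (up to the additive structure).

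The **hard part will be** the analysis at the outer point $Q$, where the group $G_Q$ of order $d$ enters. When $p \nmid d$ the argument parallels the prime-to-$p$ case and one obtains $y^d$, giving the first family $x^{d-1} + y^d + c = 0$. When $p \mid d$, however, $G_Q$ is no longer a prime-to-$p$ group: writing $d = p^e m$ with $p \nmid m$, the Galois group $G_Q$ of degree $d$ splits the projection into an additive (inseparable-like, wildly ramified $p$-part) piece and a tame cyclic $m$-part. The additive $p$-group part produces an additive polynomial $\sum_{m \mid p^i - 1} \alpha_i y^{p^i}$ — an $\mathbb{F}_p$-linear combination — and the tame part raises it to the $m$-th power, explaining the shape $(\sum \alpha_i y^{p^i})^m$ with the constraint $\alpha_e \alpha_0 \neq 0$ (the extreme terms must survive to keep degree exactly $d$ and the embedding birational). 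Managing this additive-group structure, identifying exactly which exponents $p^i$ occur via the divisibility condition $m \mid p^i - 1$, and confirming that conjugation by $G_Q$ respects the normality of $G_P$ will be the delicate bookkeeping; I expect the main obstacle to be showing that no additional cross terms can appear in the defining polynomial, which I would handle by using the commutativity/normality relation $\sigma \tau \sigma^{-1} \in G_P$ to constrain each coefficient.

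For the converse (c) $\Rightarrow$ (b), I would take each explicit curve and exhibit the two Galois groups concretely: for $P = (0:0:1)$ the automorphisms $y \mapsto \zeta y$ (with $\zeta^{d-1} = 1$) fix $P$ and act on the projection from $P$, giving $G_P$ of order $d-1$; for $Q$ the automorphisms acting on the additive-plus-tame $y$-structure give $G_Q$ of order $d$. Because these two actions operate on independent coordinate directions (one scales $x$-related data, the other acts on the $y$-polynomial), they **commute**, so $G_PG_Q = G_P \times G_Q$, establishing (b). I would close by verifying that $P$ is genuinely inner, $Q$ genuinely outer, and that each projection is Galois of the correct degree, invoking Theorem \ref{main1} if a clean existence argument is preferred over direct computation. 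The whole scheme thus reduces to a careful coordinate normalization governed by the prime-to-$p$ hypothesis on $d-1$, followed by case analysis on whether $p$ divides $d$.
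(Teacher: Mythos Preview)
Your overall architecture is sensible, and your treatment of (c) $\Rightarrow$ (b) and of the additive-polynomial shape when $p \mid d$ is on the right track. But the direct route (a) $\Rightarrow$ (c) has a real gap, and the paper organizes the proof differently precisely to close it.

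The paper proves (a) $\Rightarrow$ (b) $\Rightarrow$ (c). The step (a) $\Rightarrow$ (b) is not a formality: it rests on Lemma~\ref{line}(c), which shows that under the hypothesis $G_PG_Q = G_P \rtimes G_Q$ the line $\ell=\overline{PQ}$ satisfies $\varphi^*\ell = dP$; equivalently, the whole group $G_P \rtimes G_Q$ fixes $P$. This is proved by a short but non-obvious argument playing the two Galois structures along $\ell$ against each other. Once $G_PG_Q$ is a stabilizer in $\mathrm{Aut}(X)$, the structure theorem for such stabilizers (\cite[Lemma~11.44]{hkt}) forces it to be cyclic when $p\nmid d$, or $p$-group-by-cyclic when $p\mid d$, and in either case one deduces $G_PG_Q = G_P \times G_Q$. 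Your phrase ``the product group fixes the configuration of the two distinguished points'' does not capture this: what is needed is that $G_Q$ itself fixes $P$, and nothing in your outline supplies that.

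The reason (b) is needed before (c) is that writing down the defining equation requires \emph{both} quotient towers $X \to X/G_P \to X/G_PG_Q$ and $X \to X/G_Q \to X/G_PG_Q$. The first exists because $G_P$ is normal; the second requires $G_Q$ normal, i.e.\ condition (b). Your plan to ``analyze $G_Q$ of order $d$'' and read off the $y$-part implicitly uses the second tower. Relatedly, the relation you propose to exploit, ``$\sigma\tau\sigma^{-1}\in G_P$'', is stated backwards: normality of $G_P$ gives $\tau\sigma\tau^{-1}\in G_P$, which constrains the $G_P$-side, not the $G_Q$-side you need. If you insert the fixed-point lemma and the resulting passage through (b) before your coordinate normalization, the rest of your plan becomes essentially the paper's argument.
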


On the other hand, there exist examples of plane curves such that $G_PG_Q=G_P\ltimes G_Q$ and $G_PG_Q \ne G_P\times G_Q$. 
In this case, such curves are classified as follows. 

\begin{theorem}\label{main3}
Let $C$ be a plane curve of degree $d \ge 3$. 
Assume that $p=0$ or $d(d-1)$ is prime to $p$. 
Then there exist an inner Galois point $P$ and an outer Galois point $Q$ such that $G_PG_Q=G_P \ltimes G_Q$ and $G_PG_Q \ne G_P \times G_Q$, if and only if $C$ is in one of the following cases: 
\begin{itemize}
\item[(a)] $p \ne 2, 3$, $d=3$ and $C$ is projectively equivalent to the curve defined by 
$$y^2x+(x+1)^2(x-8)=0.$$
\item[(b)] $p \ne 2, 3$, $d=4$ and $C$ is projectively equivalent to the curve defined by 
$$y^3x+(x+1)^3(x+9)=0. $$ 
\end{itemize}
\end{theorem}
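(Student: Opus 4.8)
The plan is to treat the two implications separately, the ``only if'' direction being the substantial one. For the easy direction I would simply verify on each listed curve that $P=[0:1:0]$ is an inner Galois point (the automorphism $(x,y)\mapsto(x,\zeta y)$ with $\zeta^{d-1}=1$ generates a cyclic $G_P$ of order $d-1$, since $y^{d-1}$ is the only $y$-dependence) and that $Q=[1:0:0]$ is an outer Galois point; here one checks that the degree-$d$ polynomial in $x$ obtained by fixing $y$ is Galois with group $\mathbb{Z}/3$ (for $d=3$) or $(\mathbb{Z}/2)^2$ (for $d=4$). Because $G_P$ then permutes the inertia of $G_Q$ nontrivially, $G_PG_Q\cong S_3$ resp. $A_4$, which is $G_P\ltimes G_Q$ but not $G_P\times G_Q$.

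For the converse, let $X$ be the smooth model and $G:=G_PG_Q\subset{\rm Aut}(X)$, with $G_1=G_P$, $G_2=G_Q$ in the notation of Theorem \ref{main1}. As $p\nmid d(d-1)$ both groups are tame, and since $G_P$ fixes the smooth point $P$ it embeds in $k^{\times}$ through its action on the tangent line, so $G_P$ is cyclic of order $d-1$; from $\gcd(d-1,d)=1$ we get $G_P\cap G_Q=1$ and $|G|=d(d-1)$. By Theorem \ref{main1} the divisor identity (b), $P+\sum_{\sigma\in G_1}\sigma(\eta(P))=\sum_{\tau\in G_2}\tau(P)$, holds for some $\eta\in G_Q$, and the first key step is to mine it. Comparing the coefficient of $P$ on the two sides (and using that $G$ is nonabelian, as $G_PG_Q\ne G_P\times G_Q$, so $G$ cannot fix $P$) forces ${\rm Stab}_{G_Q}(P)=1$; the fibre $\pi_Q^{-1}(\bar P)$ is then a free $G_Q$-orbit of $d$ points, and (b) says $\sum_{\sigma}\sigma(\eta(P))$ is exactly its complement of $P$. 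Thus $G_P$ acts transitively on $\pi_Q^{-1}(\bar P)\setminus\{P\}$, and since $g\cdot\tau(P)=(g\tau g^{-1})(P)$ for $g\in G_P$, this is precisely the conjugation action: $G_P$ is transitive on $G_Q\setminus\{1\}$.

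From transitivity I would read off the group structure: all nontrivial elements of $G_Q$ are conjugate, hence of one prime order $\ell$, and since the center is characteristic and nontrivial it must equal $G_Q$, giving $G_Q\cong(\mathbb{Z}/\ell)^k$ with $d=\ell^k$; the image of $G_P$ in ${\rm GL}_k(\mathbb{F}_\ell)$ is cyclic of order $\ell^k-1$ acting regularly on nonzero vectors, i.e.\ a Singer cycle, so $G=(\mathbb{Z}/\ell)^k\rtimes\mathbb{Z}/(\ell^k-1)$ is a Frobenius group. The second key step is to force $g(X)=0$. Working over $Y:=X/G_Q\cong\mathbb{P}^1$, on which the Singer complement $G_P$ acts with two fixed points $\bar P,\bar P^{\ast}$, one sees that a nontrivial $g\in G_P$ can only fix points of $X$ over $\bar P$ or $\bar P^{\ast}$; over $\bar P$ the regular (fixed-point-free) Singer action leaves only $P$, while over $\bar P^{\ast}$ the relation $(A-I)$ invertible (no eigenvalue $1$) gives a single fixed point, and $H^1(G_P,G_Q)=0$ for coprime orders provides one common such point $P^{\ast}$. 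Hence $\pi_P\colon X\to X/G_P$ is branched only over the images of $P$ and $P^{\ast}$, both totally; feeding $X/G_P\cong\mathbb{P}^1$ into Riemann--Hurwitz gives $2g(X)-2=-2(d-1)+2(d-2)=-2$, so $X\cong\mathbb{P}^1$.

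It then remains to embed the Frobenius group in ${\rm PGL}_2(k)={\rm Aut}(\mathbb{P}^1)$. Since $\ell\ne p$, the tame group $(\mathbb{Z}/\ell)^k$ sits in ${\rm PGL}_2$ only when $k=1$ or $(\ell,k)=(2,2)$; for $k=1$ the normalizer of $\mathbb{Z}/\ell$ (lying in a torus) induces only inversion, so the Singer complement of order $\ell-1$ forces $\ell-1\le2$, whence $d=3$ and $G=S_3$, while $(\ell,k)=(2,2)$ yields $d=4$ and $G=A_4$. Finally I would reconstruct the equations: the form $y^{d-1}x+(x+1)^{d-1}(x+c)=0$ is dictated by $G_P=\langle y\mapsto\zeta y\rangle$ and by the ordinary $(d-1)$-fold point at $(-1,0)$ that drops the genus to $0$, and the single constant $c$ is pinned down by requiring that the degree-$d$ polynomial in $x$ over $k(y)$ realize the prescribed Galois group: for $d=3$ this is the condition that its discriminant be a square in $k(y)$, giving $c=-8$, and for $d=4$ that its resolvent cubic split completely, giving $c=9$. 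I expect the main obstacles to be the passage from transitive conjugation to the exact elementary-abelian/Singer (Frobenius) structure, and the concluding determination of the constants $c$, the latter being a direct but delicate discriminant/resolvent computation.
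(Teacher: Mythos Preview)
Your approach is correct in outline and genuinely different from the paper's. The paper works geometrically in the plane: it shows $\varphi^*\ell$ has $d$ points, proves the non-commutation lemma $\sigma\tau\ne\tau\sigma$, and then studies the configuration of smooth and singular points on the fixed line $\ell_P$ of $G_P$. Through a sequence of multiplicity estimates (Lemmas \ref{non-product2}--\ref{non-product4}) and Riemann--Hurwitz inequalities comparing $\pi_P$ and $\pi_Q$, it forces $d\le4$, then shows $C$ is rational by locating a $(d-1)$-fold point on $\ell_P$, and finally pins down the free constant by imposing that some fibre of $\pi_Q$ over $y=\beta$ be a perfect $d$-th power (resp.\ a perfect square). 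You instead extract the abstract group $G=G_P\ltimes G_Q$ first: the divisor identity of Theorem \ref{main1} gives the regular conjugation action of $G_P$ on $G_Q\setminus\{1\}$, hence $G_Q\cong(\mathbb{Z}/\ell)^k$ with $G_P$ a Singer complement, and then you get $g(X)=0$ directly and finish via the classification of finite subgroups of $PGL_2(k)$. This is cleaner conceptually and bounds $d$ and proves rationality in one stroke, whereas the paper's argument stays closer to the projective model and requires no group classification beyond cyclicity of tame inertia.

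Two points deserve tightening. In your genus step you tacitly assume $\pi_Q$ is unramified over $\bar P^{\ast}$ so that the fibre is a $G_Q$-torsor; you should note that the inertia subgroup at any point of that fibre is $G_P$-invariant (since $G_Q$ is abelian and $G_P$ preserves the fibre), hence by transitivity of the Singer action is either trivial or all of $G_Q$, and the latter would make $G$ fix a point and be cyclic, a contradiction. Once that is said, your $H^1(G_P,G_Q)=0$ argument and the eigenvalue observation that $A^m-I$ is invertible for every nontrivial power of a Singer generator give exactly two fixed points for each $\sigma\in G_P\setminus\{1\}$, and Riemann--Hurwitz yields $g=0$ as you claim. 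For the constants $c$, your discriminant/resolvent route is valid but different from the paper, which instead looks for a totally ramified (resp.\ index-two) fibre of $\pi_Q$ away from $\ell_P$ and matches coefficients of $(x+c')^3$ (resp.\ $(x^2+cx+d)^2$); either computation determines $c$ uniquely. For the if-part, the paper avoids direct verification that $Q$ is Galois on the explicit curves by invoking Theorem \ref{main1} with $A_4$ (resp.\ $S_3$) inside $PGL_2(k)$; your direct check is fine but does require carrying out the resolvent computation you flag as delicate.
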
 

For the proof of the if-part of Theorem \ref{main3}, we prove the existence of triples $(G_1, G_2, P)$ with conditions (a) and (b) in Theorem \ref{main1} such that $G_1G_2 \cong S_3$ or $A_4$, where $S_3$ (resp. $A_4$) is the symmetric (resp. alternative) group of degree three (resp. of degree four).  
For the case where $d$ is divisible by $p$, the following holds. 

\begin{proposition} \label{main4}
Let $p>0$ and let $d$ be divisible by $p$. 
Then there exist an inner Galois point $P$ and an outer Galois point $Q$ such that $G_PG_Q=G_P \ltimes G_Q$, $G_PG_Q \ne G_P \times G_Q$ and $\overline{PQ}$ is not the tangent line at $P$, if and only if  $d$ is a power of $p$, and $C$ is projectively equivalent to the curve defined by 
$$ y^{d-1}x+(x+1)^d=0. $$
\end{proposition}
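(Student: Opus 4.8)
The plan is to apply Theorem \ref{main1} with $G_1=G_P$ and $G_2=G_Q$, so that the existence of the required pair of Galois points becomes equivalent to the existence of a triple $(G_1,G_2,P)$ on the smooth model $X$ satisfying conditions (a) and (b), subject to $|G_2|=d$, $|G_1|=d-1$, $G_1G_2=G_1\ltimes G_2\ne G_1\times G_2$, and the geometric requirement that $\overline{PQ}$ differ from the tangent line at $\varphi(P)$. Since $p\mid d$ we have $\gcd(d-1,d)=1$, whence $G_1\cap G_2=\{1\}$, $|G_1G_2|=d(d-1)$, the order of $G_1$ is prime to $p$, and $G_1$ is cyclic (being the Galois group at an inner Galois point with $p\nmid d-1$). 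Thus the problem reduces to analysing a group $G=G_1\ltimes G_2$ of order $d(d-1)$ in which a cyclic complement $G_1$ of order $d-1$ acts nontrivially on the normal subgroup $G_2$ of order $d$.

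For the if-part I would verify the construction directly on the curve. Writing the homogenization as $Y^{d-1}X+(X+Z)^d=0$ with $d=p^e$, I would check that $P=[0:1:0]$ is a smooth point and $G_P=\{(x,y)\mapsto(x,\lambda y):\lambda^{d-1}=1\}$ has order $d-1$, so that $\pi_P$ has degree $d-1$ and $P$ is an inner Galois point. For the outer point I would take $Q=[1:0:0]$ and set $\tau_\gamma:(x,y)\mapsto(x+\gamma y,y)$ for $\gamma$ with $\gamma^{p^e}+\gamma=0$; because $d=p^e$ forces $\binom{d}{k}\equiv 0\pmod p$ for $0<k<d$, a short computation using the relation $y^{d-1}x=-(x+1)^d$ on $C$ shows that each $\tau_\gamma$ preserves $C$ and that these form an elementary abelian group $G_Q$ of order $p^e=d$, so that $\pi_Q$ has degree $d$ and $Q$ is an outer Galois point. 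The identity $\sigma_\lambda\tau_\gamma\sigma_\lambda^{-1}=\tau_{\lambda^{-1}\gamma}$ then exhibits $G_PG_Q=G_P\ltimes G_Q$ with nontrivial action, hence $G_PG_Q\ne G_P\times G_Q$; and since the tangent line at $P$ is $\{X=0\}$ while $\overline{PQ}=\{Z=0\}$, the non-tangency condition holds. (These groups also satisfy conditions (a) and (b) of Theorem \ref{main1}, recovering the same conclusion.)

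For the only-if part I would start from the triple $(G_1,G_2,P)$ furnished by Theorem \ref{main1} and determine the structure of $G_2$. Realizing $G_2=G_Q$ as a subgroup of $\mathrm{PGL}_3$ that fixes $Q$ and preserves each line through $Q$, one sees that in suitable coordinates $G_2$ lies in a group isomorphic to $k^{*}\ltimes k^{2}$, so its $p$-part is unipotent and its prime-to-$p$ part consists of homologies with center $Q$. Writing $d=p^{e}m$ with $p\nmid m$, the key step is to show $m=1$, i.e. that $d$ is a power of $p$: here I would combine the facts that $G_1$ is cyclic of order $d-1$ coprime to $|G_2|$ and acts nontrivially on $G_2$, that $\pi_Q:X\to X/G_2\cong\mathbb{P}^1$ is Galois, and that $\overline{PQ}$ is not tangent at $\varphi(P)$ so that $P$ is an unramified point of $\pi_Q$; these together force the homology part to be trivial and identify $G_2$ with the elementary abelian translation group of order $p^{e}$. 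Once $d=p^{e}$ and the two actions are pinned down ($G_1$ diagonal, $G_2$ unipotent), I would choose homogeneous coordinates adapted to $P$, $Q$ and to these actions and read off, from $G$-invariance of the defining polynomial, that the equation is projectively equivalent to $y^{d-1}x+(x+1)^d=0$.

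The main obstacle is this only-if direction, specifically showing that the prime-to-$p$ part of $G_Q$ is trivial so that $d=p^{e}$, and then that the normal form is unique. The group-theoretic constraint alone (a cyclic group of order $d-1$ acting faithfully and nontrivially on a normal subgroup of order $d$ with $\gcd(d-1,d)=1$) is not sufficient; the argument must bring in the geometry of the two Galois points — the ramification of $\pi_Q$ via Riemann--Hurwitz and the non-tangency of $\overline{PQ}$ — to exclude the alternative configurations permitted in the tame setting of Theorem \ref{main3} and to secure the explicit reconstruction of the curve.
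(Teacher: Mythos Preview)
Your if-part is correct and in fact more self-contained than the paper's: the paper passes to the projectively equivalent model $x-y^{q}=0$ and cites \cite{fukasawa-hasegawa} for the Galois-point properties, whereas you verify everything directly on $Y^{d-1}X+(X+Z)^{d}=0$ with $P=(0:1:0)$, $Q=(1:0:0)$. The computations of $G_P$, $G_Q$, the relation $\sigma_\lambda\tau_\gamma\sigma_\lambda^{-1}=\tau_{\lambda^{-1}\gamma}$, and the check $T_PC=\{X=0\}\ne\{Z=0\}=\overline{PQ}$ are all fine.

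The only-if direction, however, has a genuine gap, and the paper's argument is quite different from the plan you sketch. First, you place $G_Q$ inside $\mathrm{PGL}_3(k)$ (in a copy of $k^{*}\ltimes k^{2}$), but for singular plane curves this is not automatic; the paper only establishes $G_P\hookrightarrow\mathrm{PGL}_3(k)$, using that $d-1$ is prime to $p$, and never claims a linear realisation of $G_Q$. Second, even granting such an embedding, the three facts you list (cyclic $G_P$ acting nontrivially, $\pi_Q$ Galois onto $\mathbb{P}^1$, $P$ unramified for $\pi_Q$) do not by themselves force the prime-to-$p$ part of $G_Q$ to vanish; this is exactly the point you flag as the obstacle, and no mechanism is proposed to resolve it.

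The paper's route is instead: (i) using Lemmas~\ref{non-product2} and \ref{non-product3} (and Riemann--Hurwitz for the small cases $d=3,4$), show that \emph{every} ramification point of $\pi_Q$ lies on the $G_P$-fixed line $\ell_P$; (ii) apply Lemma~\ref{non-product4}: since $\sigma\tau\ne\tau\sigma$ for all nontrivial $\sigma\in G_P$, $\tau\in G_Q$ (Lemma~\ref{non-product}), the $d-1$ conjugates $\sigma^{-1}\tau\sigma$ of a single order-$p$ element $\tau$ already exhaust $G_Q\setminus\{1\}$, hence every nontrivial element of $G_Q$ has order $p$ and $d=p^{e}$; (iii) for $R\in\varphi^{-1}(\ell_P)$, the same non-commutation lemma gives $|G_P(R)|+1\le|G_Q(R)|=p^{k}$, and comparing the sizes of the $G_P$-orbit and the $G_Q$-orbit of $R$ inside the fibre $\varphi^{-1}(\ell_P)$ yields $p^{e-k}\le 1$, so $G_Q(R)=G_Q$, $C\cap\ell_P$ is a single point, and the normal form follows. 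The idea your plan is missing is precisely step~(i): without first confining all $\pi_Q$-ramification to $\ell_P$, the conjugation argument that forces $d=p^{e}$ never gets off the ground.
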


\section{Proof of Theorem \ref{main1}} 

\begin{proof}[Proof of Theorem \ref{main1}]
We consider the if-part. 
Since $\varphi(P)$ and $Q$ are Galois points with $G_{\varphi(P)}=G_1$ and $G_Q=G_2$, it follows that fixed fields $k(X)^{G_{\varphi(P)}}=k(X)^{G_1}$ and $k(X)^{G_Q}=k(X)^{G_2}$ are rational. 
Condition (a) is satisfied. 
Let $\ell$ be the line passing through points $\varphi(P)$ and $Q$, and let $D=\varphi^*\ell$. 
Since $Q$ is an outer Galois point and $G_Q=G_2$, 
$$ D=(\pi_Q \circ \varphi)^*(\pi_Q(\varphi(P)))=\sum_{\tau \in G_2}\tau(P) $$
(see \cite[III.7.1, III.7.2, III.8.2]{stichtenoth}).
Let $\eta \in G_2\setminus\{1\}$. 
Then $\varphi(\eta(P)) \in \ell$. 
To prove that condition (b) is satisfied, we would like to prove that 
$$ D-P=\sum_{\sigma \in G_1}\sigma(\eta(P)). $$
Assume that $\eta(P) \ne P$.  
Then $\varphi(\eta(P)) \in \ell \setminus \{\varphi(P)\}$. 
Since $\varphi(P)$ is an inner Galois point and $G_{\varphi(P)}=G_1$, it follows that  
$$ D-P=(\pi_{\varphi(P)}\circ \varphi)^*(\pi_{\varphi(P)} \circ \varphi(\eta(P)))=\sum_{\sigma \in G_1}\sigma(\eta(P)).$$
The claim follows. 
Assume that $\eta(P)=P$. 
Then $\ell$ is the tangent line at $\varphi(P)$.  
Since $\varphi(P)$ is an inner Galois point, it follows that $P \in {\rm supp} (\pi_{\varphi(P)} \circ \varphi)^*(\pi_{\varphi(P)}\circ\varphi(P))$ and  
$$ D-P=(\pi_{\varphi(P)}\circ\varphi)^*(\pi_{\varphi(P)}\circ\varphi(P))=\sum_{\sigma \in G_1}\sigma(P).$$
It follows from the condition $P=\eta(P)$ that the claim follows.

We consider the only-if part. 
Let $\eta \in G_2$ be an automorphism as in condition (b). 
By condition (a), there exists a function $f \in k(X)$ such that 
$$ k(X)^{G_1}=k(f), \ (f)_{\infty}=\sum_{\sigma \in G_1}\sigma(\eta(P)) $$
(see \cite[III.7.1, III.7.2, III.8.2]{stichtenoth}). 
Similarly, there exists $g \in k(X)$ such that 
$$ k(X)^{G_2}=k(g), \ (g)_{\infty}=\sum_{\tau \in G_2}\tau(P). $$
Considering condition (b), we take a divisor 
$$ D:=P+\sum_{\sigma \in G_1}\sigma(\eta(P))=\sum_{\tau \in G_2}\tau(P). $$
Then $f, g \in \mathcal{L}(D)$ and the sublinear system of $|D|$ corresponding to a linear space $\langle f, g, 1\rangle$ is base-point-free. 
Since orders $|G_1|$ and $|G_2|$ are coprime, it follows that $G_1 \cap G_2 =\{1\}$. 
By this condition, the induced morphism
$$ \varphi: X \rightarrow \mathbb{P}^2; \ (f:g:1)$$
is birational onto its image (see \cite[Proposition 1]{fukasawa3}), and of degree $|G_2|$. 
Then the projection from the point $\varphi(P)=(0:1:0)$ coincides with the morphism $(f:1)$ and the projection from the point $Q:=(1:0:0)$ coincides with the morphism $(g:1)$. 
Therefore, $\varphi(P)$ is an inner Galois point with $G_{\varphi(P)}=G_1$ and $Q$ is an outer Galois point with $G_{Q}=G_2$.  
\end{proof}

\section{Proof of Theorem \ref{main2}}
Let $C \subset \mathbb{P}^2$ be an irreducible curve of degree $d \ge 3$, and let $\varphi: X \rightarrow C$ be the normalization. 

\begin{lemma} \label{line}
Let $P \in C$ be an inner Galois point, let $Q \in \mathbb{P}^2 \setminus C$ be an outer Galois point and let $\ell$ be the line passing through $P$ and $Q$. 
Then the following hold. 
\begin{itemize}
\item[(a)] The number of points of ${\rm supp} (\varphi^*\ell)$ is $1$ or $d$. 
\item[(b)] If $G_PG_Q=G_P \rtimes G_Q$ or $G_P \ltimes G_Q$, then $G_P$ fixes $P$. 
In particular, if $\sigma \in G_P \setminus \{1\}$ fixes a point $R \in {\rm supp}(\varphi^*\ell)$, then $R=P$.  
\item[(c)] If $G_PG_Q=G_P \rtimes G_Q$, then $\varphi^*\ell=d P$. 
\end{itemize}
\end{lemma}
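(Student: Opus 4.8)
The plan is to describe the divisor $D:=\varphi^*\ell$ simultaneously from the two Galois points and then run an orbit-counting argument. Since $P$ is inner and $Q$ is outer, $|G_P|=d-1$ and $|G_Q|=d$ are coprime, so $G_P\cap G_Q=\{1\}$ and $G:=G_PG_Q$ has order $d(d-1)$. Because $\ell=\overline{PQ}$ passes through the outer Galois point $Q$, the computation recalled in the proof of Theorem \ref{main1} gives $D=\sum_{\tau\in G_Q}\tau(P)$; hence ${\rm supp}(D)$ is a single $G_Q$-orbit and every one of its $d/s$ points occurs in $D$ with the same multiplicity $s:=|{\rm Stab}_{G_Q}(P)|$. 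Viewing instead the projection from the inner point $P$, the coefficient of $P$ in $D$ is the intersection multiplicity of $\ell$ and $C$ at $P$: if $\ell$ is not tangent at $P$ this coefficient is $1$ and $D-P$ is the fiber of $\pi_P\circ\varphi$ over $\pi_P(\ell)$, a single $G_P$-orbit $\sum_{\sigma\in G_P}\sigma(R_0)$ of degree $d-1$ with $R_0\ne P$; if $\ell$ is tangent at $P$ then $D-P=\sum_{\sigma\in G_P}\sigma(P)$.

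For part (a) I would match multiplicities. If $\ell$ is not tangent at $P$, then $P$ occurs in $D$ with multiplicity $1$, so uniformity along the $G_Q$-orbit forces $s=1$ and $|{\rm supp}(D)|=d$. If $\ell$ is tangent at $P$, then in $D=P+\sum_{\sigma\in G_P}\sigma(P)$ each point of the $G_P$-orbit of $P$ different from $P$ occurs with multiplicity $|{\rm Stab}_{G_P}(P)|$, while $P$ occurs with multiplicity $1+|{\rm Stab}_{G_P}(P)|$; uniformity of the multiplicities then forces the $G_P$-orbit of $P$ to collapse to $\{P\}$, so $D=dP$ and $|{\rm supp}(D)|=1$. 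Thus $|{\rm supp}(D)|\in\{1,d\}$.

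For parts (b) and (c) I would split on $|{\rm supp}(D)|$. If $|{\rm supp}(D)|=1$, then $D=dP$, $\ell$ is tangent, and the tangent computation above already shows $G_P$ fixes $P$. So assume $|{\rm supp}(D)|=d$ and write ${\rm supp}(D)=\{P,R_1,\dots,R_{d-1}\}$, on which $G_Q$ acts simply transitively, while $\{R_1,\dots,R_{d-1}\}=D-P$ is a single $G_P$-orbit on which $G_P$ acts simply transitively; here I use that each $\sigma\in G_P$, being a deck transformation of $\pi_P\circ\varphi$, preserves the fiber $\{R_1,\dots,R_{d-1}\}$, whereas $\sigma(P)$ lies in the distinct fiber over the tangent direction, and these two fibers are disjoint because the tangent line and $\ell$ meet only at $P$. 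If $G_Q\triangleleft G$ (case $G_P\ltimes G_Q$), then $G$ permutes the $G_Q$-orbits; for $\sigma\in G_P$ the set $\sigma({\rm supp}(D))$ is a $G_Q$-orbit meeting ${\rm supp}(D)$ (it contains $\sigma(R_i)\in\{R_1,\dots,R_{d-1}\}$), hence equals ${\rm supp}(D)$, so $\sigma(P)\in{\rm supp}(D)$; as $\sigma(P)$ lies in the tangent fiber, which meets ${\rm supp}(D)$ only in $P$, we get $\sigma(P)=P$. If instead $G_P\triangleleft G$ (case $G_P\rtimes G_Q$), then $G_Q$ permutes the $G_P$-orbits and acts transitively on those meeting ${\rm supp}(D)$; but exactly two $G_P$-orbits meet ${\rm supp}(D)$, the orbit of $P$ (meeting it only in $P$) and $\{R_1,\dots,R_{d-1}\}$, with intersection sizes $1$ and $d-1$, and a transitive $G_Q$ would carry one onto the other bijectively, forcing $1=d-1$, contrary to $d\ge 3$. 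Hence $|{\rm supp}(D)|=d$ cannot occur when $G_P\triangleleft G$, so $D=dP$, which is precisely (c) and again gives that $G_P$ fixes $P$. The ``in particular'' clause is then immediate: for $|{\rm supp}(D)|=1$ it is trivial, and for $|{\rm supp}(D)|=d$ the group $G_P$ acts freely on $\{R_1,\dots,R_{d-1}\}$, so a nontrivial element of $G_P$ fixes no $R_i$ and can fix only $P$.

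The step I expect to be the main obstacle is the bookkeeping in the case $G_P\rtimes G_Q$: one must check that ${\rm supp}(D)$ meets exactly two $G_P$-orbits with intersection sizes $1$ and $d-1$, and that $G_Q$ permutes these transitively, the decisive geometric input being that the tangent line at $P$ and the secant $\ell$ share only $P$, so that the tangent fiber and $\{R_1,\dots,R_{d-1}\}$ are disjoint. Underpinning every step are the uniformity of multiplicities along a $G_Q$-orbit and the fiber identities $D=\sum_{\tau\in G_Q}\tau(P)$ and $D-P=\sum_{\sigma\in G_P}\sigma(R_0)$, both instances of the standard behaviour of Galois coverings used in the proof of Theorem \ref{main1}.
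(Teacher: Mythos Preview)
Your argument is correct. For part (a) you do exactly what the paper does: combine the uniformity of multiplicities coming from the outer point $Q$ with the fibre description coming from the inner point $P$, and match coefficients in the tangent and non-tangent cases.

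For (b) and (c) your route diverges from the paper's. The paper treats both semidirect-product cases at once: given $\sigma\in G_P\setminus\{1\}$, it picks $\tau\in G_Q\setminus\{1\}$, rewrites $\tau\sigma=\sigma'\tau'$ with $\sigma'\in G_P$, $\tau'\in G_Q$, observes that $\tau'(P)\in\{R_1,\dots,R_{d-1}\}$ and that $G_P$ preserves this fibre, hence $\tau\sigma(P)=\sigma'\tau'(P)\in\varphi^{-1}(\ell)$, so $\sigma(P)\in\varphi^{-1}(\ell)\cap\varphi^{-1}(T_PC)=\{P\}$; then it deduces (c) from (b) by noting that $\tau\sigma\tau^{-1}\in G_P$ fixes $\tau(P)$, which by the ``in particular'' clause forces $\tau(P)=P$ for every $\tau\in G_Q$. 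You instead split cases and use block systems: when $G_Q$ is normal, $\sigma$ carries the $G_Q$-orbit ${\rm supp}(D)$ to a $G_Q$-orbit meeting it, hence equal to it, whence $\sigma(P)\in{\rm supp}(D)\cap\varphi^{-1}(T_PC)=\{P\}$; when $G_P$ is normal, $G_Q$ permutes transitively the $G_P$-orbits meeting ${\rm supp}(D)$, forcing the intersection sizes $1$ and $d-1$ to coincide, which is absurd for $d\ge 3$, so $|{\rm supp}(D)|=d$ is impossible and (c) follows directly. Both proofs hinge on the same three geometric facts (the fibre $\{R_1,\dots,R_{d-1}\}$ is $G_P$-stable, $\sigma(P)$ lies over $T_PC$, and $T_PC\cap\ell=\{P\}$ with $P$ smooth); the paper's version is shorter and handles (b) uniformly, while yours makes the group-theoretic mechanism more transparent and yields (c) without invoking (b).
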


\begin{proof}
We consider assertion (a). 
Let 
$$ \varphi^*\ell=\sum_{R \in {\rm supp} (\varphi^*\ell)}n(R) R. $$
Since $Q$ is an outer Galois point and $\ell \ni R$, it follows from \cite[III.7.2]{stichtenoth} that $n(R)=n(R')$ for any $R, R' \in {\rm supp} (\varphi^*\ell)$. 
If $\ell$ is not a tangent line at $P$, then $n(R)=n(P)=1$. 
This implies that the number of points of ${\rm supp} (\varphi^*\ell)$ is equal to $d$. 
Assume that $\ell$ is the tangent line at $P$. 
Then the divisor 
$$\varphi^*\ell-P=(n(P)-1)P+\sum_{R \in {\rm supp} (\varphi^*\ell), \varphi(R) \ne P}n(R)R$$ 
coincides with $(\pi_P \circ \varphi)^*(\pi_P(P))$. 
Assume that $R \in {\rm supp} (\varphi^*\ell)$ with $\varphi(R) \ne P$ exists. 
Since $P$ is a Galois point, it follows that $n(R)=n(P)-1$. 
This is a contradiction. 
Therefore, in this case, $\varphi^*\ell=d P$.  

We consider (b). 
By (a), the number of points of ${\rm supp}(\varphi^*\ell)$ is $1$ or $d$. 
For the former case, it follows from \cite[III.8.2]{stichtenoth} that the assertion holds.   
We can assume that ${\rm supp}(\varphi^*\ell)$ consists of $d$ points. 
Let $\sigma \in G_P \setminus \{1\}$. 
By the assumption, there exist $\sigma' \in G_P \setminus \{1\}$ and $\tau, \tau' \in G_Q \setminus \{1\}$ such that $\tau \sigma=\sigma' \tau'$. 
Then $\tau\sigma(P)=\sigma'\tau'(P) \in \varphi^{-1}(\ell)$. 
Since $\tau^{-1}(\varphi^{-1}(\ell)) \subset \varphi^{-1}(\ell)$, it follows that $\sigma(P) \in T_PC \cap \ell=\{P\}$. 
Therefore, $\sigma(P)=P$ follows.  

We consider (c). 
Let $\tau \in G_Q$ and let $\sigma \in G_P \setminus \{1\}$. 
By (b), $\tau\sigma \tau^{-1}$ fixes $\tau(P)$. 
Since $G_PG_Q=G_P \rtimes G_Q$, $\tau \sigma \tau^{-1} \in G_P \setminus \{1\}$.  
By (b), $\tau(P)=P$. 
It follows from \cite[III.8.2]{stichtenoth} that the ramification index at $P$ for $\pi_Q$ is $d$. 
The assertion follows. 
\end{proof}

\begin{proof}[Proof of Theorem \ref{main2}]
We prove (a) $\Rightarrow$ (b). 
By condition (a) and Lemma \ref{line}, the group $G_P \rtimes G_Q$ fixes $P$. 
It follows from \cite[Lemma 11.44]{hkt} that if $p=0$ or $d(d-1)$ is prime to $p$, then $G_P \rtimes G_Q$ is a cyclic group. 
Therefore, $G_P \rtimes G_Q=G_P \times G_Q$. 
Assume that $p>0$ and $p$ divides $d$. 
Let $d=p^em$, where $m$ is not divisible by $p$. 
It follows from \cite[Lemma 11.44]{hkt} that $G_P \cong \mathbb{Z}/(d-1)\mathbb{Z}$ and there exist subgroups $N, H$ such that $G_Q=N \rtimes H$, $|N|=p^e$ and $H \cong (\mathbb{Z}/m\mathbb{Z})$.
Since $|G_P\rtimes G_Q|=(d-1)p^em$, the subgroup $N$ is a normal subgroup of $G_P \rtimes G_Q$, and hence $G_P \rtimes N=G_P \times N$. 
Since $|G_P \rtimes H|=(d-1)m$ is not divisible by $p$, the group $G_P \rtimes H$ is a cyclic group. 
Therefore, $G_P \rtimes H=G_P\times H$. 
As a consequence, $G_PG_Q=G_P \times G_Q$ follows. 

We prove (b) $\Rightarrow$ (c). 
By assertion (b) in Lemma \ref{line}, $\varphi^*T_PC=d P$. 
By the assumption, $G_Q$ acts on the subfield $k(X)^{G_{P}}$, which is rational. 
Assume that $p=0$ or $d(d-1)$ is prime to $p$. 
Since $G_Q$ is totally ramified at $\pi_P(P)$ and $d$ is prime to $p$, it follows that $G_Q$ is a cyclic group and there exists a ramification point $R \in X/G_{P}$ different from $\pi_P(P)$ (see, for example, \cite[Theorem 11.91]{hkt}). 
The line corresponding to $R$ is denoted by $\ell_Q$. 
By the assumption that $p=0$ or $d-1$ is prime to $p$, a similar argument can be applied to the morphism $X \rightarrow X/G_Q$. 
It is inferred that $G_P$ is a cyclic group and there exists a ramification point $S \in X/G_Q$ different from $\pi_Q(P)$. 
The line corresponding to $S$ is denoted by $\ell_P$. 
The linear system corresponding to $\varphi: X \rightarrow C \subset \mathbb{P}^2$ is determined by three lines $\ell, \ell_Q$ and $\ell_P$. 
For a suitable system of coordinates, we can assume that such lines are defined by $Z=0$, $Y=0$ and $X=0$, respectively. 
Then the covering maps $X \rightarrow X/G_P$ and $X/G_P \rightarrow X/\langle G_P, G_Q \rangle$ are represented by $(X:Y:Z) \mapsto (Y:Z)$ and $(Y:Z) \mapsto (Y^d:Z^d)$. 
When we represent the covering map $X \rightarrow X/G_Q$ by $(X:Y:Z) \mapsto (X:Z)$, we infer that the covering map $X/G_Q \rightarrow X/\langle G_Q, G_P \rangle$ is represented by $(X:Z) \mapsto (aX^{d-1}+bZ^{d-1}:Z^{d-1})$ for some $a, b \in k$, by considering the branch points of the covering map $X \rightarrow X/\langle G_P, G_Q \rangle$. 
Therefore, $y^d=ax^{d-1}+b$ holds in $k(X)$. 
This gives the defining equation of $C$. 
For a suitable system of coordinates, we can assume that $a=-1$ and $b \in \{-1, 0\}$. 

Assume that $p>0$ and $d$ is divisible by $p$. 
Let $d=p^em$, where $m$ is not divisible by $p$.
Since $G_Q$ is totally ramified at $\pi_P(P)$, it follows that $m$ divides $p^e-1$ and there exist subgroups $N, H$ such that $G_Q=N \rtimes H$, $N \cong (\mathbb{Z}/p\mathbb{Z})^{\oplus e}$ and $H \cong \mathbb{Z}/m\mathbb{Z}$ (see, for example, \cite[Theorem 11.91]{hkt}). 
Then there exists a ramification point $R \in X/G_P$ different from $\pi_P(P)$. 
The line corresponding to $R$ is denoted by $\ell_Q$. 
Considering the assumption that $d-1$ is prime to $p$, we infer that $G_P$ is a cyclic group and there exists a ramification point $S \in X/G_Q$ different from $\pi_Q(P)$. 
The line corresponding to $S$ is denoted by $\ell_P$. 
The linear system corresponding to $\varphi: X \rightarrow C \subset \mathbb{P}^2$ is determined by three lines $\ell, \ell_Q$ and $\ell_P$. 
For a suitable system of coordinates, we can assume that such lines are defined by $Z=0$, $Y=0$ and $X=0$, respectively. 
According to Lemma \ref{p^em} below, the covering maps $X \rightarrow X/G_P$ and $X/G_P \rightarrow X/\langle G_P, G_Q \rangle$ are represented by $(X:Y:Z) \mapsto (Y:Z)$ and $(Y:Z) \mapsto (f(Y/Z)Z^d:Z^d)$ for a polynomial 
$$f(y)=\left(\sum_{m \ | \ p^i-1}\alpha_i y^{p^i}\right)^m, $$
where $\alpha_e \ldots, \alpha_0 \in k$ and $\alpha_e \alpha_0 \ne 0$.  
When we represent the covering map $X \rightarrow X/G_Q$ by $(X:Y:Z) \mapsto (X:Z)$, we infer that the covering map $X/G_Q \rightarrow X/\langle G_Q, G_P \rangle$ is represented by $(X:Z) \mapsto (aX^{d-1}+bZ^{d-1}:Z^{d-1})$ for some $a, b \in k$, by considering the branch points of the covering map $X \rightarrow X/\langle G_P, G_Q \rangle$. 
Therefore, $(\sum_{m \ | \ p^i-1} \alpha_i y^{p^i})^m=ax^{d-1}+b$ holds in $k(X)$. 
This gives the defining equation of $C$. 
For a suitable system of coordinates, we can assume that $a=-1$ and $b \in \{-1, 0\}$. 
\end{proof}

\begin{lemma} \label{p^em}
Let $p>0$, $e>0$, $m \ | \ p^e-1$ and let $\psi: \mathbb{P}^1 \rightarrow \mathbb{P}^1$ be a Galois covering of degree $d$ such that the Galois group $G$ is described as $G=N \rtimes H$ for a $p$-subgroup $N$ of order $p^e$ and a cyclic group $H$ of order $m$. 
Assume that $\psi(1:0)=(1:0)$ and $\psi(0:1)=(0:1)$. 
If G fixes $(1:0)$ and $H$ fixes $(0:1)$, then $\psi(Y:Z)=(f(Y/Z)Y^d:Z^d)$ for some polynomial
$$ f(y)=c \left(\sum_{m \ | \ p^i-1}\alpha_iy^{p^i}\right)^m, $$
where $\alpha_e, \ldots, \alpha_0, c \in k$ with $\alpha_e\alpha_0 \ne 0$.  
\end{lemma}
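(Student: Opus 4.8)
The plan is to pass to the affine coordinate $y=Y/Z$ and make $G$ completely explicit inside $\mathrm{Aut}(\mathbb{P}^1)=\mathrm{PGL}_2(k)$ before computing any quotient. Since $G$ fixes $(1:0)$, every element of $G$ is an affine transformation $y\mapsto ay+b$. A transformation with $a\ne 1$ is conjugate to $y\mapsto ay$ and so has order equal to the multiplicative order of $a$, which is prime to $p$; hence the only elements of $p$-power order are the translations $y\mapsto y+v$. Thus the normal $p$-subgroup $N$ is a group of translations, which I would identify with an $\mathbb{F}_p$-subspace $V\subset(k,+)$ with $|V|=p^e$ and $\dim_{\mathbb{F}_p}V=e$. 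Since $H$ fixes both $(1:0)$ and $(0:1)$, that is $y=\infty$ and $y=0$, it consists of scalings $y\mapsto\zeta^j y$ for a primitive $m$-th root of unity $\zeta$, and the relation $G=N\rtimes H$ forces $\zeta V=V$.

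The second step is to realize the two quotients separately. For the quotient by $N$ I would use the separable additive polynomial
$$ L(y)=\prod_{v\in V}(y-v). $$
The key classical fact here is that, because the roots of $L$ form an $\mathbb{F}_p$-vector space and $L$ is separable, $L$ is a linearized ($\mathbb{F}_p$-linear) polynomial, so
$$ L(y)=\sum_{i=0}^{e}c_i y^{p^i},\qquad c_e=1, $$
and moreover $c_0=L'(y)$ is a nonzero constant by separability. As $L$ is $N$-invariant of degree $p^e=|N|$, it generates $k(y)^N$.

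The third step is to control the residual $H$-action. Using $\zeta V=V$ I would compute $L(\zeta y)=\zeta^{p^e}L(y)$, and since $m\mid p^e-1$ gives $\zeta^{p^e}=\zeta$, this becomes $L(\zeta y)=\zeta L(y)$. Comparing coefficients in $\sum_i c_i\zeta^{p^i}y^{p^i}=\zeta\sum_i c_i y^{p^i}$ then forces $c_i(\zeta^{p^i}-\zeta)=0$, so $c_i\ne 0$ only when $m\mid p^i-1$, which is exactly the claimed shape. In the coordinate $w=L(y)$ the group $H$ acts by $w\mapsto\zeta w$, so $w^m=L(y)^m$ is $G$-invariant; a degree count, $[k(y):k(L(y)^m)]=p^e\cdot m=d=|G|$, shows $k(y)^G=k(L(y)^m)$. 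Hence $\psi$ equals $y\mapsto L(y)^m$ up to an automorphism of the target, and the normalizations $\psi(0)=0$, $\psi(\infty)=\infty$ (which $y\mapsto L(y)^m$ already satisfies, since $0\in V$) leave only a scaling $w\mapsto cw$, giving $f(y)=c\,L(y)^m$ and the homogeneous form $(f(Y/Z)Z^d:Z^d)$. Both $\alpha_e=c_e=1$ and $\alpha_0=c_0$ are nonzero, as required.

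I expect the main obstacle to be the structural fact invoked in the second step, namely that $\prod_{v\in V}(y-v)$ is a $p$-polynomial with nonvanishing linear term. Everything else is routine: the description of the point-stabilizing subgroups of $\mathrm{PGL}_2(k)$ and the Galois correspondence with degree bookkeeping over the rational function field. It is this additive-polynomial identity that makes the exponents collapse to powers of $p$ and, combined with the multiplicative invariance $\zeta V=V$, pins down the precise form of $f$.
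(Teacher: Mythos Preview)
Your proposal is correct and follows essentially the same route as the paper: identify $N$ as a group of translations and $H$ as scalings, realize the $N$-quotient via the additive (linearized) polynomial $L(y)$, then take an $m$-th power for the $H$-quotient, and finally use the $H$-compatibility to force $c_i\ne 0\Rightarrow m\mid p^i-1$. The only cosmetic differences are that you build $L(y)=\prod_{v\in V}(y-v)$ explicitly and derive $L(\zeta y)=\zeta L(y)$ from $\zeta V=V$, whereas the paper cites Goss for the additive-polynomial form and obtains $g(\zeta y)=\zeta g(y)$ via the action of $H$ on the intermediate quotient.
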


\begin{proof}
The covering $\psi$ is the composite map of the Galois coverings $\psi_1:\mathbb{P}^1 \rightarrow \mathbb{P}^1/N$ and $\psi_2: \mathbb{P}^1/N \rightarrow \mathbb{P}^1/(G/N)$, where $G/N \cong H$. 
We can assume that $\psi_1(1:0)=(1:0)$ and $\psi_1(0:1)=(0:1)$. 
Since $N$ fixes $(1:0)$ and is of degree $p^e$, $\psi_1=(g(Y/Z)Z^{p^e}:Z^{p^e})$ for some polynomial $g(y)$ and there exists an additive subgroup $S \subset k$ such that $N=\{(Y:Z) \mapsto (Y+\alpha Z:Z) \ | \ \alpha \in S\}$. 
It follows from the assumption $\psi_1(0:1)=(0:1)$ that $g(0)=0$.  
Since $g(y)$ is invariant under the action of $N$, it follows from \cite[Proposition 1.1.5 and Theorem 1.2.1]{goss} that 
$$ g(y)=\alpha_e y^{p^e}+\cdots+\alpha_1 y^{p}+\alpha_0 y$$
for some $\alpha_e, \ldots, \alpha_0 \in k$ with $\alpha_e \alpha_0 \ne 0$.  
Since the cyclic covering $\psi_2$ is ramified at points $(1:0)$ and $(0:1)$, $\psi_2(Y:Z)=(c Y^m:Z^m)$ for some $c \in k$. 
Then $\psi=(c(g(y))^m:1)$. 
Let $\zeta$ be a primitive $m$-th root of unity. 
Since $m \ | \ p^e-1$, it follows that $g(\zeta y)=\zeta g(y)$. 
Then $\alpha_i \ne 0$ implies that $(\zeta)^{p^i}=\zeta$. 
We obtain the required form.  
\end{proof}

\begin{remark}
The curves defined by $x^{d-1}+y^d+1=0$ or $x^{d-1}+(\sum_{m \ | \ p^i-1}\alpha_iy^{p^i})^m+1=0$ are smooth. 
For these curves, points $(1:0:0)$ and $(0:1:0)$ are all Galois points. 
This was proved in \cite[Theorem 3]{fukasawa2}. 

For the curve $x^{d-1}+y^d=0$, the same holds. 
It is not difficult to prove it, since $(0:0:1)$ is a cusp and $(1:0:0)$ is a unique total inflection point. 
\end{remark}

\begin{remark} 
\begin{itemize}
\item[(1)]
When $p>0$ and $d-1$ is a power of $p$, $G_PG_Q=G_P \rtimes G_Q$ does not imply $G_PG_Q=G_P \times G_Q$. 
For (the plane model of) the Giulietti--Korchm\'{a}ros curve
$$ x^{q^3}+x-(x^q+x)^{q^2-q+1}-y^{q^3+1}=0 $$ 
(see \cite{giulietti-korchmaros}), where $q$ is a power of $p$, $P=(1:0:0)$ is an inner Galois point and $Q=(0:1:0)$ is an outer Galois point (\cite{fukasawa-higashine}). 
However, $G_Q$ is not a normal subgroup of $G_PG_Q=G_P \rtimes G_Q$. 
\item[(2)] Assume that $p>0$ and $p$ divides $d-1$. 
In this case also, we can prove that $G_Q \cong \mathbb{Z}/d\mathbb{Z}$ and the defining equation of $C$ is of the form $y^d+h(x)=0$, where $h(x)$ is a polynomial of degree at most $d$, up to a projective equivalence. 
\end{itemize} 
\end{remark}

\section{Proof of Theorem \ref{main3}} 
In this section, we consider plane curves admitting an inner Galois point $P$ and an outer Galois point $Q$ with $G_PG_Q=G_P\ltimes G_Q$ and $G_PG_Q \ne G_P \times G_Q$. 

\begin{lemma}
If $d(d-1)$ is prime to $p$, then the number of points of ${\rm supp} (\varphi^*\ell)$ is $d$. 
\end{lemma}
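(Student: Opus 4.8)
The plan is to proceed by contradiction and rule out the possibility that ${\rm supp}(\varphi^*\ell)$ consists of a single point. By assertion (a) of Lemma \ref{line}, this number is either $1$ or $d$, so it suffices to show that the value $1$ is incompatible with the standing hypotheses $G_PG_Q = G_P \ltimes G_Q$ and $G_PG_Q \neq G_P \times G_Q$ of this section.

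So suppose ${\rm supp}(\varphi^*\ell)$ is a single point. Since $P \in \ell$, that point must be $P$, and then (exactly as in the proof of Lemma \ref{line}(a)) we obtain $\varphi^*\ell = dP$. Hence the fiber of the Galois covering $\pi_Q \circ \varphi: X \to \mathbb{P}^1$ over the image of $\ell$ is the single point $P$, so $\pi_Q$ is totally ramified at $P$; by \cite[III.8.2]{stichtenoth} the decomposition group at $P$ is all of $G_Q$, and in particular $G_Q$ fixes $P$. On the other hand, assertion (b) of Lemma \ref{line} shows that $G_P$ fixes $P$ as well. Therefore the whole group $G_P \ltimes G_Q$ fixes $P$. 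Since $P$ is an inner Galois point and $Q$ an outer one, $|G_P| = d-1$ and $|G_Q| = d$, so $|G_P \ltimes G_Q| = d(d-1)$, which is prime to $p$ by hypothesis. A finite subgroup of ${\rm Aut}(X)$ of order prime to $p$ fixing a point is cyclic by \cite[Lemma 11.44]{hkt}; in particular it is abelian, forcing $G_P \ltimes G_Q = G_P \times G_Q$ and contradicting the hypothesis $G_PG_Q \neq G_P \times G_Q$. Consequently the single-point case cannot occur, and ${\rm supp}(\varphi^*\ell)$ has exactly $d$ points.

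This argument is essentially a combination of the already-established Lemma \ref{line} with the structure theorem \cite[Lemma 11.44]{hkt}, so I do not expect a serious obstacle. The one step requiring a little care is the passage, in the single-point case, from $\varphi^*\ell = dP$ to the statement that $G_Q$ fixes $P$: here I rely on the total ramification of $\pi_Q$ at $P$ together with the transitivity of the Galois action on fibers, which guarantees that all of $G_Q$ (and not merely a proper subgroup) stabilizes $P$. Once both $G_P$ and $G_Q$ are known to fix $P$, the order count $d(d-1)$ and the resulting cyclicity immediately yield commutativity, and hence the contradiction.
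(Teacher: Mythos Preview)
Your proof is correct and follows essentially the same route as the paper: reduce to the single-point case via Lemma~\ref{line}(a), observe that then $G_P \ltimes G_Q$ fixes $P$, and invoke \cite[Lemma~11.44]{hkt} together with the coprimality of $d(d-1)$ and $p$ to force cyclicity and hence $G_PG_Q = G_P \times G_Q$, a contradiction. The paper's version is terser---it simply asserts that $G_P \ltimes G_Q$ fixes $P$---whereas you spell out why both $G_P$ and $G_Q$ individually fix $P$; this added detail is correct and helpful but does not change the argument.
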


\begin{proof}
By Lemma \ref{line} (a), the number of points of ${\rm supp}(\varphi^*\ell)$ is $1$ or $d$. 
Assume the former. 
Note that $G_P\ltimes G_Q$ fixes $P$. 
It follows from the assumption on the degree and \cite[Lemma 11.44]{hkt} that $G_P \ltimes G_Q$ is a cyclic group.  
This implies that $G_PG_Q=G_P \times G_Q$. 
This is a contradiction.
\end{proof}

Hereafter, we assume that ${\rm supp} (\varphi^*\ell)$ consists of $d$ points. 

\begin{lemma} \label{non-product}
For any $\sigma \in G_P \setminus \{1\}$ and $\tau \in G_Q \setminus \{1\}$, $\sigma\tau \ne \tau\sigma$.     
\end{lemma}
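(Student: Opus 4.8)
The plan is to argue by contradiction, combining two facts already at our disposal: that $G_P$ fixes $P$, and that a nontrivial element of $G_P$ can fix, among the points of ${\rm supp}(\varphi^*\ell)$, only $P$ itself. Both are recorded in Lemma \ref{line}(b). The remaining ingredient is the behaviour of $G_Q$ on the fiber, which I would extract from the fact that $Q$ is an outer Galois point together with the standing assumption that ${\rm supp}(\varphi^*\ell)$ consists of $d$ points.

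First I would observe that, since $Q$ is an outer Galois point and $\ell$ passes through $Q$, we have $\varphi^*\ell=\pi_Q^*(\pi_Q(P))=\sum_{\tau \in G_Q}\tau(P)$, a fiber whose support has been assumed to consist of $d=|G_Q|$ points. Hence the points $\tau(P)$ for $\tau \in G_Q$ are pairwise distinct, so $G_Q$ acts simply transitively on ${\rm supp}(\varphi^*\ell)$; in particular $\tau(P) \ne P$ for every $\tau \in G_Q \setminus \{1\}$.

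Now suppose, for contradiction, that $\sigma\tau=\tau\sigma$ for some $\sigma \in G_P \setminus \{1\}$ and $\tau \in G_Q \setminus \{1\}$. Since $G_P$ fixes $P$ by Lemma \ref{line}(b), we have $\sigma(P)=P$, whence
$$ \sigma(\tau(P))=\tau(\sigma(P))=\tau(P). $$
Thus $\sigma$ fixes the point $\tau(P)$, which lies in ${\rm supp}(\varphi^*\ell)$. Applying the last assertion of Lemma \ref{line}(b), a nontrivial element of $G_P$ can fix a point of ${\rm supp}(\varphi^*\ell)$ only if that point is $P$; therefore $\tau(P)=P$, contradicting the previous paragraph. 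This shows that $\sigma$ and $\tau$ cannot commute.

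I do not expect a genuine obstacle here: the statement is essentially a compatibility between the $G_P$-fixed-point structure and the free $G_Q$-action on the fiber. The only points requiring care are to invoke the simply transitive action of $G_Q$ correctly — that is, to note that the hypothesis ``${\rm supp}(\varphi^*\ell)$ consists of $d$ points'' is precisely what forces $\tau(P)\ne P$ for $\tau \ne 1$ — and to confirm that Lemma \ref{line}(b) applies because $\tau(P)$ genuinely belongs to ${\rm supp}(\varphi^*\ell)$.
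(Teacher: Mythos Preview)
Your proof is correct and follows essentially the same route as the paper: assume commutativity, use Lemma~\ref{line}(b) to see that $\sigma$ fixes $\tau(P)\in{\rm supp}(\varphi^*\ell)$, conclude $\tau(P)=P$, and then derive a contradiction from the fact that ${\rm supp}(\varphi^*\ell)$ has $d$ points (equivalently, that $G_Q$ acts freely there). The only difference is cosmetic ordering---you establish $\tau(P)\ne P$ at the outset, whereas the paper derives $\tau(P)=P$ first and then infers $\tau=1$.
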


\begin{proof}
Assume that $\sigma\tau=\tau\sigma$. 
By Lemma \ref{line} (b), it follows that
$$ \sigma\tau(P)=\tau\sigma(P)=\tau(P). $$
If $\tau(P) \ne P$, then by Lemma \ref{line} (b), $\sigma(\tau(P)) \ne \tau(P)$. 
Therefore, $\tau(P)=P$. 
Since ${\rm supp}(\varphi^*\ell)$ consists of $d$ points, it follows that $\tau=1$. 
This is a contradiction. 
\end{proof}

We assume that $d-1$ is prime to $p$. 
In this case, the covering map $X/G_Q \rightarrow X/G_PG_Q$ is tame and totally ramified at $\pi_Q(P)$. 
Similarly to the proof of Theorem \ref{main2}, it follows that $G_P$ has an iclusion $G_P \hookrightarrow PGL(3, k)$, $G_P$ is a cyclic group of order $d-1$, and there exists a line $\ell_P \subset \mathbb{P}^2 \setminus \{P\}$ with $\ell_P \ni Q$ such that 
$$ \ell_P \cup \{P\} =\{R \in \mathbb{P}^2 \ | \ \sigma(R)=R \ \mbox{ for all } \sigma \in G_P \}. $$  
For a suitable system of coordinates, $P=(0:1:0)$, $Q=(1:0:0)$, lines $\ell_P$ and $T_PC$ are defined by $Y=0$ and $X=0$ respectively, and $C$ is defined by 
$$ y^{d-1}x+g(x)=0, $$
where $g(x)$ is a polynomial of degree $d$. 

\begin{lemma} \label{non-product2}
The number of points of $(C \setminus {\rm Sing}(C))\cap \ell_P$ is zero or one. 
\end{lemma}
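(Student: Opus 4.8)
The plan is to reinterpret the smooth points of $C$ on $\ell_P$ as unramified fixed points of $G_P$ lying in a single fibre of the projection $\pi_Q$, and then to derive a contradiction from the non-commutativity of $G_P$ and $G_Q$ (Lemma \ref{non-product}) if two such points existed.

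First I would translate the statement into the language of the Galois covering $\pi_Q\circ\varphi\colon X\to X/G_Q\cong\mathbb P^1$. Since $Q\in\ell_P$, the projection from $Q$ contracts the whole line $\ell_P$ to a single point $y_0\in X/G_Q$, so every point of $\varphi^{-1}(\ell_P)$ lies in the fibre $(\pi_Q\circ\varphi)^{-1}(y_0)$. In the coordinates fixed above, $\ell_P=\{Y=0\}$, and on this line the defining equation $y^{d-1}x+g(x)=0$ reduces to $g(x)=0$; since $Q\notin C$, the set $C\cap\ell_P$ consists exactly of the roots of $g$. A direct computation with the defining equation shows that $(a,0)$ is a smooth point of $C$ precisely when $a$ is a simple root of $g$, and that in this case the tangent line to $C$ there is $\{x=a\}\neq\ell_P$. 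Consequently $\ell_P$ is transversal to $C$ at every smooth point it contains, so each such point is unramified for $\pi_Q$; moreover it is fixed by $G_P$, because $G_P$ fixes $\ell_P$ pointwise.

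Next I would exploit that $\pi_Q$ is Galois with group $G_Q$, so all points of the fibre over $y_0$ share a common ramification index $e$. If $e>1$, the fibre contains no unramified point, hence $C$ has no smooth point on $\ell_P$ and the count is $0$. Otherwise $e=1$, the fibre is unramified, and $G_Q$ acts freely and transitively on it. Suppose, for contradiction, that there are two distinct smooth points $R_1,R_2$ of $C$ on $\ell_P$, regarded as points of $X$. Both lie in the fibre over $y_0$ and are fixed by $G_P$, so by freeness there is a unique $\tau\in G_Q$ with $\tau(R_1)=R_2$, and $\tau\neq 1$. Fix $\sigma\in G_P\setminus\{1\}$. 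Since $G_Q$ is normal in $G_PG_Q$, the element $\sigma\tau\sigma^{-1}$ lies in $G_Q$, and using $\sigma(R_1)=R_1$ and $\sigma(R_2)=R_2$ one computes $\sigma\tau\sigma^{-1}(R_1)=\sigma\tau(R_1)=\sigma(R_2)=R_2=\tau(R_1)$. By the uniqueness of $\tau$ this forces $\sigma\tau\sigma^{-1}=\tau$, i.e.\ $\sigma\tau=\tau\sigma$, contradicting Lemma \ref{non-product}. Hence at most one smooth point of $C$ lies on $\ell_P$.

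The main obstacle I anticipate is the verification that every smooth point of $C$ on $\ell_P$ is automatically transversal, and therefore unramified for $\pi_Q$: this is what forces the common ramification index to be $1$ and unlocks the free action of $G_Q$ needed to invoke non-commutativity. (Alternatively, transversality follows from the fact that the tame cyclic group $G_P$ acts faithfully on the tangent space at a fixed smooth point, so the tangent direction of $C$ there cannot coincide with the pointwise-fixed line $\ell_P$.) Once transversality is secured, the remainder is a short orbit-theoretic deduction combining the normality of $G_Q$ in $G_PG_Q$ with Lemma \ref{non-product}.
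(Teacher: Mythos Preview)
Your argument is correct and follows essentially the same route as the paper: assume two smooth points $R_1,R_2\in\ell_P$, use transitivity of $G_Q$ on the fibre over $\pi_Q(\ell_P)$ to find $\tau\in G_Q\setminus\{1\}$ with $\tau(R_1)=R_2$, then use normality of $G_Q$ together with $\sigma(R_i)=R_i$ to force the commutator $[\sigma,\tau]$ to fix $R_1$, and conclude $\sigma\tau=\tau\sigma$ from the triviality of the stabiliser, contradicting Lemma~\ref{non-product}. The paper's proof asserts ``$\pi_Q$ is not ramified at $R_1$'' without comment; your explicit transversality check (via the defining equation or the tame faithful tangent action) supplies exactly the justification the paper leaves implicit.
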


\begin{proof}
Assume that there exist two different smooth points $R_1$ and $R_2 \in \ell_P$. 
Then there exists $\tau \in G_Q \setminus \{1\}$ such that $\tau(R_1)=R_2$. 
Let $\sigma \in G_P \setminus \{1\}$. 
Then 
$$ \tau^{-1}\sigma^{-1}\tau\sigma(R_1)=\tau^{-1}\sigma^{-1}\tau(R_1)=\tau^{-1}\sigma^{-1}(R_2)=\tau^{-1}(R_2)=R_1. $$
Note that $\sigma^{-1}\tau\sigma \in G_Q$, since $G_PG_Q=G_P \ltimes G_Q$. 
Since $\pi_Q$ is not ramified at $R_1$, $\tau^{-1}\sigma^{-1}\tau\sigma=1$. 
Therefore, $\tau\sigma=\sigma\tau$. 
This is a contradiction to Lemma \ref{non-product}.
\end{proof}

\begin{lemma} \label{non-product3}
Assume that $d \ge 4$ and there exists a ramification point $S \in C \setminus \ell_P$ of index $m \ge 2$ for $\pi_Q$. 
Let $n$ be the number of points of $C \cap \ell_P$. 
Then the following holds.  
\begin{itemize}
\item[(a)] If $d \ge 5$, then $n \ge \frac{m-1}{m}d$. 
\item[(b)] $m=2$. 
\item[(c)] There does not exist a point $R \in C \cap \ell_P$ such that the multiplicity of $R$ is two. 
\end{itemize}
\end{lemma}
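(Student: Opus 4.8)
The plan is to run Riemann--Hurwitz on the two projections $\pi_P$ and $\pi_Q$ at once. In the fixed coordinates $C$ is $y^{d-1}x+g(x)=0$, $\ell_P=\{y=0\}$, and $G_P=\langle\sigma\rangle$ is cyclic of order $d-1$ with $\sigma(x,y)=(x,\lambda y)$ for a primitive $(d-1)$-th root of unity $\lambda$; here $\pi_P$ records $x$ (degree $d-1$) and $\pi_Q$ records $y$ (degree $d$), and both covers are tame. First I would compute the ramification of $\pi_P$ exactly. Write $C\cap\ell_P=\{R_1,\dots,R_n\}$ with $R_j=(s_j,0)$, and let $b_j$ be the number of branches of $C$ at $R_j$. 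Substituting $X=s_jZ$ into the equation (and using $g(s_j)=0$) shows that the line $x=s_j$ meets $C$ only at $P$ (multiplicity $1$) and at $R_j$ (multiplicity $d-1$); as $G_P$ is tame cyclic, the fibre of $\pi_P$ over $s_j$ consists of $b_j$ places of equal index $(d-1)/b_j$ (so $b_j\mid(d-1)$), while $P$ is totally ramified of index $d-1$ over $x=0$ and $\pi_P$ is unramified elsewhere. Hence
\[
2g_X-2=-2(d-1)+(d-2)+\sum_{j=1}^{n}\bigl((d-1)-b_j\bigr)=n(d-1)-d-B,\qquad B:=\sum_{j=1}^{n}b_j.
\]

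Next I turn to $\pi_Q$. Being Galois, its fibre over $y=0$ is one $G_Q$-orbit of $B=d/e_0$ places of common index $e_0$, contributing $d-B$ to Riemann--Hurwitz. Since $S\notin\ell_P$ we have $y_S\neq0$, so the $G_P$-orbit $S,\sigma(S),\dots,\sigma^{d-2}(S)$ consists of $d-1$ points with distinct $y$-values $\lambda^ky_S$; as $\sigma$ preserves $\pi_Q$-ramification together with its index, these yield $d-1$ distinct branch points, over each of which the fibre (one $G_Q$-orbit) carries $d/m$ points of index $m$, contributing $(d-1)(d-d/m)$. Keeping only these contributions and discarding the remaining nonnegative ones, $2g_X-2\ge -2d+(d-B)+(d-1)(d-d/m)$. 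Substituting the $\pi_P$-formula for $2g_X-2$, the terms $-d$ and $-B$ cancel and I obtain $n(d-1)\ge(d-1)(d-d/m)$, that is (a) $n\ge\frac{m-1}{m}d$, with equality precisely when $\pi_Q$ has no further branch points.

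For (b) I combine (a) with the local data on $\ell_P$. By Lemma~\ref{non-product2} at most one $R_j$ is smooth, so at least $n-1$ of them are singular and meet $\ell_P$ with intersection multiplicity $\ge2$; as these multiplicities sum to $d$ this gives $d\ge2(n-1)+1$, i.e. $n\le\frac{d+1}{2}$. For $d\ge5$, feeding in $n\ge\frac{m-1}{m}d$ yields $\frac{m-1}{m}\le\frac{d+1}{2d}$, hence $m\le2+\frac{2}{d-1}<3$ and $m=2$. For $d=4$ one has $m\mid d$ (the fibre over $y_S$ is one $G_Q$-orbit of $d/m$ points), so $m\in\{2,4\}$; if $m=4$ the three branch points from the orbit of $S$ are totally ramified, giving the lower bound $2g_X-2\ge5-B$, which contradicts the $\pi_P$-value $2g_X-2=3n-4-B\le2-B$ (using $n\le2$). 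Thus $m=2$ throughout.

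Finally, for (c), suppose some $R_j$ has multiplicity $2$, so $b_j\in\{1,2\}$. Because $b_j\mid(d-1)$, the node case $b_j=2$ forces $d$ to be odd, while the cusp case $b_j=1$ forces $e_0=\mu^{\mathrm{int}}_j\ge2$ over $y=0$. In each case I would combine $\mu^{\mathrm{int}}_j=b_je_0$, the constraints $\sum_j\mu^{\mathrm{int}}_j=d$ and $\frac d2\le n\le\frac{d+1}{2}$ (from $m=2$), the genus identity above, and the fact that any singular point off $\ell_P$ occurs in a full $G_P$-orbit of size $d-1$ (so contributes at least $d-1$ to the total $\delta$-invariant $\frac{(d-1)(d-2)}{2}-g_X$), in order to force a numerical contradiction; the node subcase, where the involution $(x,y)\mapsto(x,-y)$ in $G_P$ interchanges the two branches, is the delicate one. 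I expect this final step to be the main obstacle: unlike (a) and (b), which are Riemann--Hurwitz bookkeeping, excluding multiplicity $2$ requires a precise local analysis at the singular points of $\ell_P$ and a careful reconciliation of the branch count $b_j$, the divisibility $b_j\mid(d-1)$, the forced value of $e_0$, and the off-line singularities dictated by the genus.
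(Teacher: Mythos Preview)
Your arguments for (a) and (b) are correct, and in fact sharper than the paper's. By carrying the branch count $B=\sum_j b_j$ through both Riemann--Hurwitz formulas, you obtain $n\ge\frac{m-1}{m}d$ directly (for every $d\ge4$), whereas the paper uses the cruder upper bound $2g-2\le -2(d-1)+(d-2)+(d-2)n$ for $\pi_P$, gets only $n\ge\frac{m-1}{m}d-\frac1m\bigl(1+\frac{2}{d-2}\bigr)$, and then has to handle (b) by a case split on $(d,m)$. Your single bound $n\le\frac{d+1}{2}$ replaces all of that. (One small check you left implicit: $g(0)\ne0$, since otherwise $X$ divides the defining polynomial and $C$ would be reducible; this is what makes $P$ genuinely totally ramified over $x=0$.)

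Part (c), however, is where your proposal has a real gap. First, you have the cases backwards: the node case $b_j=2$ is the easy one---$b_j\mid d-1$ and $d$ even (from $m=2\mid d$) kill it immediately---so the cusp case $b_j=1$ is the one that needs work, not the node case. More importantly, the numerical bookkeeping you sketch cannot close the cusp case. Concretely, for $d=4$ one finds $e_0=2$, $B=2$, $n=2$, $g=1$, both $R_j$ ordinary cusps, $\sum\delta=2$ all on $\ell_P$, and no off-line singularities; every constraint you list is satisfied, so no contradiction emerges from counting alone.

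The paper's proof of (c) is group-theoretic and uses Lemma~\ref{non-product} in an essential way. Since $b_j=1$, the place $R$ is the entire $\pi_P$-fibre over $s_j$, so \emph{all of $G_P$ fixes $R$}. A local computation (for $d\ge4$ the tangent cone at a multiplicity-$2$ point of $\ell_P$ is $(x-s_j)^2$, so $\ell_P$ is not the cuspidal tangent) gives $e_0=I_R(C,\ell_P)=2$, hence $G_Q(R)=\{1,\tau\}$ for a single involution $\tau$. For any $\sigma\in G_P\setminus\{1\}$ the conjugate $\sigma^{-1}\tau\sigma$ fixes $R$ and lies in $G_Q$ (normality), hence lies in $G_Q(R)=\{1,\tau\}$; this forces $\sigma\tau=\tau\sigma$, contradicting Lemma~\ref{non-product}. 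That conjugation step---exploiting that $R$ is a common fixed point of $G_P$ and of the stabiliser $G_Q(R)$---is the missing idea in your proposal.
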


\begin{proof}
We consider (a). 
On the line $\overline{QS}$, there exist $d/m$ ramification points of index $m$. 
Since any automorphism in $G_P$ is a linear transformation, fixes $Q$ and does not fix any point of $C \cap \overline{QS}$, it follows that there exist $(d-1) \times (d/m)$ ramification points of index $m$. 
It follows from the Riemann--Hurwitz formula that for $\pi_Q$, 
$$ 2g-2 \ge -2d+(d-1) \times \frac{d}{m}(m-1). $$
For $\pi_P$, 
$$ 2g-2 \le -2(d-1)+(d-2)+(d-2)n. $$
With these inequalities used, it follows that 
$$ n \ge \frac{m-1}{m}d-\frac{1}{m}\left(1+\frac{2}{d-2}\right). $$
If $d \ge 5$, then $\frac{1}{m}\left(1+\frac{2}{d-2}\right)<1$. 
Therefore, 
$$ n \ge \frac{m-1}{m}d. $$

We consider (b). 
Assume that $d \ge 5$ and $m \ge 3$. 
Since the number of singular points on $\ell_P$ is at most $d/2$, by (a), the number of smooth points on $\ell_P$ is at least 
$$ \frac{m-1}{m}d-\frac{d}{2}.  $$
If $d \ge 7$ or $m \ge 4$, then this value is strictly more than $1$. 
This is a contradiction to Lemma \ref{non-product2}. 
Assume that $d=6$ and $m=3$. 
In this case, $n \ge 4$ and there exists a smooth point on $\ell_P$. 
Therefore, the number of singular points on $\ell_P$ is at most two, and hence, 
there exist two smooth points on $\ell_P$. 
This is a contradiction to Lemma \ref{non-product2}. 
Assume that $d=4$. 
Then $m=2$ or $4$. 
Assume that $m=4$. 
Then by the proof of (a), 
$$ n \ge \frac{3}{4}d-\frac{1}{4}\left(1+\frac{2}{d-2}\right)=\frac{5}{2}. $$
Therefore, $n \ge 3$. 
Then there exist two smooth points on $\ell_P$. 
This is a contradiction to Lemma \ref{non-product2}. 

We consider (c). 
By (b), $d$ is even. 
Assume that $R \in \ell_P$ is a point of multiplicity two. 
Note that $\overline{RP} \cap C=\{R, P\}$, by considering the defining equation. 
If $\varphi^{-1}(R)$ consists of two points, then the degree of $\pi_P$ is divisible by two. 
Since $d$ is even and $d-1$ is odd, this is a contradiction. 
Therefore, $\varphi^{-1}(R)$ consists of a unique point. 
Such a point also is denoted by $R$. 
Then the ramification index of $R$ for $\pi_P$ is $d-1$, and that of $R$ for $\pi_Q$ is two. 
The stabilizer subgroup $G_{Q}(R)$ for $G_Q$ consists of a unique involution $\tau$. 
Let $\sigma \in G_P \setminus \{1\}$. 
Since $\tau$ and $\sigma$ fix $R$, $\sigma^{-1}\tau\sigma$ fixes $R$. 
It follows from the condition $G_PG_Q=G_P \ltimes G_Q$ that $\sigma^{-1}\tau\sigma \in G_Q(R)$. 
Since $|G_Q(R)|=2$, $\sigma^{-1}\tau\sigma=\tau$. 
This is a contradiction to Lemma \ref{non-product}. 
\end{proof}

\begin{lemma} \label{non-product4} 
Assume that all ramification points for $\pi_Q$ are contained in the line $\ell_P$. 
Then the following holds. 
\begin{itemize}
\item[(a)] $p>0$. 
\item[(b)] The order of each element of $G_Q \setminus \{1\}$ is equal to $p$. 
In particular, $d=p^e$ for some $e>0$. 
\end{itemize} 
\end{lemma}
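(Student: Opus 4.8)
The plan is to read the hypothesis cover-theoretically. Since $\ell_P$ is the line $Y=0$ through $Q=(1:0:0)$, it is exactly the fibre of $\pi_Q$ over the point $b:=(0:1)\in X/G_Q\cong\mathbb{P}^1$; hence the assumption that every ramification point of $\pi_Q$ lies on $\ell_P$ says precisely that the Galois cover $\pi_Q\colon X\to X/G_Q\cong\mathbb{P}^1$, of degree $|G_Q|=d$, is branched over the single point $b$. Both assertions will then follow from the impossibility of a cover of $\mathbb{P}^1$ branched at one point, once the wild part is stripped away.

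\emph{Part (a).} I would argue by contradiction. If $p=0$ then $\pi_Q$ is everywhere tame, and writing the fibre over $b$ as a sum of $n\ge 1$ points whose ramification indices add up to $d$, the Riemann--Hurwitz formula reads $2g-2=-2d+(d-n)=-(d+n)$. As $d\ge 3$ this is at most $-(d+1)<-2$, forcing $g<0$, which is absurd. Thus $\pi_Q$ must carry wild ramification over $b$, and in particular $p>0$.

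\emph{Part (b).} Here I would exploit the linearity of $G_Q$. Each element of $G_Q\subset PGL(3,k)$ fixes $Q$ and preserves every line through $Q$, so it has the matrix form $(X:Y:Z)\mapsto(aX+bY+cZ:Y:Z)$, and the assignment $\chi\colon\tau\mapsto a$ is a homomorphism $G_Q\to k^{*}$. Its image $A$ is a finite subgroup of $k^{*}$, hence cyclic of order prime to $p$, while its kernel $K$ (the shears with $a=1$) carries the additive law $(b,c)(b',c')=(b+b',c+c')$, so it is a finite additive subgroup of $(k,+)^{2}$, hence an elementary abelian $p$-group. The intermediate cover $X/K\to X/G_Q\cong\mathbb{P}^1$ is Galois with group $G_Q/K\cong A$, is tame (its degree is prime to $p$), and is still branched only over $b$, because unramifiedness upstairs forces unramifiedness in any intermediate cover. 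The Riemann--Hurwitz computation of part (a), applied now to this tame cyclic cover of $\mathbb{P}^1$ branched at one point, forces $|A|=1$. Therefore $G_Q=K$ is elementary abelian of exponent $p$, every nontrivial element has order $p$, and $d=|G_Q|=p^{e}$ for some $e>0$.

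The main obstacle I anticipate is precisely this separation of the tame part. A direct application of Riemann--Hurwitz to $\pi_Q$ itself cannot yield the exponent-$p$ conclusion, since the wild different exponents over $b$ can be large and carry no lower bound of the required shape; the role of the character $\chi$ is to carve out a tame cyclic quotient $X/K\to\mathbb{P}^1$ to which the one-branch-point obstruction applies verbatim. Two supporting points must be checked carefully: that the hypothesis genuinely localizes all branching to the fibre over $b$ (the fibre identification above), and that $G_Q$ is realized inside $PGL(3,k)$ fixing $Q$ and each line through it, so that $\chi$ is available.
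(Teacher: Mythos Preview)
Your argument for part~(a) is correct and matches the paper's: Riemann--Hurwitz forbids a tame cover of $\mathbb{P}^1$ branched over a single point, so wild ramification---hence $p>0$---is forced.

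For part~(b) there is a genuine gap. Your whole mechanism rests on the inclusion $G_Q\subset PGL(3,k)$, from which you extract the character $\chi$ and the elementary-abelian kernel $K$. But nowhere in this section is $G_Q$ shown to act linearly on $\mathbb{P}^2$; the paper establishes $G_P\hookrightarrow PGL(3,k)$ (just before Lemma~\ref{non-product2}) and nothing analogous for $G_Q$. Since $C$ is allowed to be singular, automorphisms of the normalization $X$ need not descend to projective linear maps of the plane, so this inclusion cannot be taken for granted. Without it you have neither $\chi$ nor the additive description of $K$, and your tame-quotient trick collapses: all that survives from ``one branch point'' is that $G_Q$ is a quotient of $\pi_1^{\text{\'et}}(\mathbb{A}^1)$, i.e.\ a quasi-$p$ group, which is far weaker than ``every nontrivial element has order $p$''.

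The paper's route is entirely different and uses the ambient semidirect-product hypothesis rather than any linearity of $G_Q$. From part~(a) one picks $\tau\in G_Q$ of order $p$. Because $G_Q\trianglelefteq G_P\ltimes G_Q$, the $d-1$ conjugates $\sigma^{-1}\tau\sigma$ with $\sigma\in G_P$ all lie in $G_Q$; Lemma~\ref{non-product} (no nontrivial $\sigma$ commutes with a nontrivial $\tau$) forces them to be pairwise distinct. Counting gives $G_Q\setminus\{1\}=\{\sigma^{-1}\tau\sigma:\sigma\in G_P\}$, so every nonidentity element is a conjugate of $\tau$ and hence has order $p$; in particular $d=|G_Q|=p^e$. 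This argument is short, uses only what has already been set up, and delivers the exact-order-$p$ conclusion that your quotient argument cannot reach without the unjustified linearity assumption.
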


\begin{proof}
It follows from the Riemann--Hurwitz formula that there exist a wild ramification. 
Therefore, $p>0$. 
In this case, there exists an element $\tau \in G_Q$ of order $p$. 
By Lemma \ref{non-product}, it follows that
$$ G_Q=\{ \sigma^{-1}\tau\sigma \ | \ \sigma \in G_P\} \cup \{1\}. $$
It follows from Sylow's theorem that $d=p^e$ for some $e>0$. 
\end{proof}

\begin{proof}[Proof of Theorem \ref{main3}] 
We consider the only-if part. 
By Lemma \ref{non-product4}, there exists a ramification point $S \in C \setminus \ell_P$ for $\pi_Q$.  
We prove that $d \le 4$. 
Assume that $d \ge 5$. 
By Lemma \ref{non-product3} (a) and (b), $n \ge d/2$. 
The number of smooth points on $\ell_P$ is zero or one, by Lemma \ref{non-product2}. 
If the number is zero, then, by Lemma \ref{non-product3} (c), there exist $d/2$ points of multiplicity at least three on $\ell_P$. 
This is obviously a contradiction. 
Therefore, the number of smooth points on $\ell_P$ is one. 
By Lemma \ref{non-product3} (c), there exist $(d/2)-1$ points of multiplicity at least three on $\ell_P$. 
Then we have an inequality 
$$ 1+3\times \left(\frac{d}{2}-1\right) \le d. $$
This implies that $d \le 4$. 

Let $d=4$. 
By the assumption on the degree, it follows that $p \ne 2, 3$. 
By Lemmas \ref{non-product2} and \ref{non-product3}, there exists a singular point of multiplicity $3$ on $\ell_P$. 
In particular, $C$ is rational. 
We can assume that for a suitable system of coordinates, $P=(0:1:0)$, $Q=(1:0:0)$ and $C$ is defined by 
$$ y^3x+(x+1)^3(x+a)=0, $$
where $a \in k \setminus \{0\}$. 
Let $y=\beta$ be a branch point of $\pi_Q$, where $\beta \in k \setminus \{0\}$. 
Then there exist $c, d \in k$ such that 
$$ \beta^3x+(x+1)^3(x+a)=(x^2+c x+d)^2. $$
Comparing the coefficients of both sides, we have relations 
$$ a+3=2c, \ 3a+3=c^2+2d, \ 3a+\beta^3+1=2c d, \ a=d^2. $$ 
By these equations, we have 
$$ d^4-6d^2+8d-3=(d-1)^3(d+3)=0. $$
If $d=1$, then $\beta=0$. 
This is a contradiction. 
It follows that $d=-3$, $a=9$, $c=6$ and $\beta^3=-64$. 
Then the curve $C$ is defined by 
$$ y^3x+(x+1)^3(x+9)=y^3x+64x+(x^2+6x-3)^2=0. $$

Let $d=3$. 
By the assumption on the degree, it follows that $p \ne 2, 3$. 
By Lemma \ref{non-product2}, there exists a singular point on $\ell_P$. 
Then $C$ is rational. 
Note that the index of all ramification points contained in $C \setminus \ell_P$ for $\pi_Q$ is three. 
We can assume that for a suitable system of coordinates, $P=(0:1:0)$, $Q=(1:0:0)$ and $C$ is defined by 
$$ y^2x+(x+1)^2(x+a)=0, $$
where $a \in k \setminus \{0\}$. 
Let $y=\beta$ be a branch point of $\pi_Q$, where $\beta \in k \setminus \{0\}$. 
Then there exists $c \in k$ such that 
$$ \beta^2x+(x+1)^2(x+a)=(x+c)^3. $$
Comparing the coefficients of both sides, we have relations
$$ a+2=3c, \ 2a+\beta^2+1=3c^2, \ a=c^3. $$
By these equations, we have 
$$ c^3-3c+2=(c-1)^2(c+2)=0. $$
If $c=1$, then $\beta=0$. 
This is a contradiction. 
It follows that $c=-2$, $a=-8$ and $\beta^2=27$. 
Then the curve $C$ is defined by 
$$ y^2x+(x+1)^2(x-8)=y^2x-27x+(x-2)^3=0. $$

To prove the if-part, using Theorem \ref{main1}, we prove the existence of plane rational curves of $d=3$ or $4$ with required properties.
We consider subgroups of ${\rm Aut}(\mathbb{P}^1) \cong PGL(2, k)$. 
It is well known that $A_4$ is a subgroup of $PGL(2, k)$ (see, for example, \cite[Theorem 11.91]{hkt}). 
It is also well known that there exist subgroups $G_1, G_2$ of $A_4$ such that $G_1$ is a cyclic group of order three, $G_2$ is a normal subgroup of $A_4$ with $G_2 \cong (\mathbb{Z}/2\mathbb{Z})^{\oplus 2}$, and $G_1G_2=G_1 \ltimes G_2= A_4$. 
Let $P \in \mathbb{P}^1$ be a point such that $G_1$ fixes $P$. 
We prove that conditions (a) and (b) in Theorem \ref{main1} are satisfied for the triple $(G_1, G_2, P)$. 
By L\"{u}roth's theorem, condition (a) is satisfied. 
With the action of $A_4$ considered, there exist three points $P_2, P_3, P_4$ such that $A_4P=\{P, P_2, P_3, P_4\}$. 
Let $\eta \in G_2 \setminus \{1\}$. 
Then,
$$ P+\sum_{\sigma \in G_1}\sigma(\eta(P))=P+P_2+P_3+P_4=\sum_{\tau \in G_2}\tau(P). $$
This implies condition (b) in Theorem \ref{main1}. 
By Theorem \ref{main1}, the required rational plane curve of $d=4$ is obtained. 
For $d=3$, a similar argument can be applied. 
\end{proof}

\begin{proof}[Proof of Proposition \ref{main4}]
We consider the only-if part. 
We prove that all ramification points for $\pi_Q$ are contained in $\ell_P$.
When $d \ge 5$, according to the first paragraph of the proof of the only-if part of Theorem \ref{main3}, the claim follows. 
Assume by contradiction that there exists a ramification point in $S \setminus \ell_P$ for $\pi_Q$.  
By Lemmas \ref{non-product2} and \ref{non-product3}, if $d \le 4$, then $C$ is rational. 
If $d=4$, then $p=2$ and $\pi_Q$ is wildly ramified at $6$ ramification points contained in $C \setminus \ell_P$, by Lemma \ref{non-product3} (b). 
According to the Riemann--Hurwitz formula for $\pi_Q$, this is a contradiction. 
If $d=3$, then $p=3$ and $\pi_Q$ is wildly ramified at $2$ ramification points. 
This is a contradiction. 
Therefore, the claim follows for all $d \ge 3$. 

By Lemma \ref{non-product4} (a) and (b), $p>0$ and $d=p^e$ for some integer $e>0$.  
Let $R \in \varphi^{-1}(\ell_P)$ and let $G_Q(R)$ be the stabilizer subgroup of $G_Q$ of order $p^k$. 
By the assumption, $k >0$. 
Let $G_P(R)$ be the stabilizer subgroup of $G_P$.  
For any $\sigma \in G_P(R)$, $\sigma^{-1} G_Q(R) \sigma=G_Q(R)$. 
It follows from Lemma \ref{non-product} that $|G_P(R)|+1 \le |G_Q(R)|$. 
With the action of $G_P$ on $\varphi^{-1}(\ell_P)$ considered, the set $\varphi^{-1}(\ell_P)$ contains 
$$ \frac{|G_P|}{|G_P(R)|} \ge \frac{p^e-1}{p^k-1} $$
points. 
With the action of $G_Q$ considered, it follows that 
$$ p^e=|G_Q| \ge |G_Q(R)| \times \frac{p^e-1}{p^k-1}=p^k \times \frac{p^e-1}{p^k-1}. $$
It follows that 
$$ p^{e-k} \le 1. $$
Therefore, $k=e$, that is, $G_Q(R)=G_Q$. 
This implies that $C \cap \ell_P=\{\varphi(R)\}$. 
For a suitable system of coordinates, we can assume that $\varphi(R)=(-1:0:1)$. 
Then $C$ is defined by 
$$ y^{d-1}x+(x+1)^d=0. $$

We consider the if-part. 
The curve $C$ is projectively equivalent to the curve defined by 
$$ x-y^q=0, $$
where $p>0$ and $q$ is a power of $p$. 
Let $P=(0:0:1)$ and let $Q \in \{Z=0\}\setminus \{(1:0:0), (0:1:0)\}$. 
According to \cite{fukasawa-hasegawa}, $P$ is an inner Galois point and $Q$ is an outer Galois point. 
Note that there exists an inclusion $G_P \hookrightarrow PGL(3,k)$ and $G_{P}$ fixes $Q$. 
It is inferred that $G_PG_Q=G_P \ltimes G_Q$. 
Since the line passing through $P$ and $Q$ intersects $C$ at $q$ points, it follows that $G_PG_Q \ne G_P \rtimes G_Q$, by Lemma \ref{line}. 
\end{proof}

\begin{remark}
For the curve of degree $4$ in Theorem \ref{main3}, the point $P$ is a unique inner Galois point. 
This is proved easily, because two flexes are needed for each inner Galois points and there exists $G_P \hookrightarrow PGL(3, k)$.  
For the curve described in Proposition \ref{main4}, the set of Galois points is determined in \cite{fukasawa-hasegawa}.   
\end{remark} 

\begin{remark}
According to a classification of finite subgroups $G$ of $PGL(2, k)$ (see, for example, \cite[Theorem 11.91]{hkt}), a finite non-abelian subgroup $G$ containing subgroups $H_1, H_2 \subset G$ such that $G=H_1 \ltimes H_2$, $|H_2|=|H_1|+1 \ge 4$ and $G$ has a short orbit of length $|H_2|$ must be one of the following: 
\begin{itemize}
\item[(a)] $A_4$ with $p \ne 2, 3$;  
\item[(b)] the semidirect product of an elementary abelian $p$-group of order $q$ with a cyclic group of order $q-1$.  
\end{itemize} 
The cases (a) and (b) are described in Theorem \ref{main3} (b) and Proposition \ref{main4}. 
\end{remark}

\begin{remark}
If ${\rm supp}(\varphi^*\ell)$ consists of $d$ points, then the action of the group $G_P \ltimes G_Q$ on ${\rm supp}(\varphi^*\ell)$ is doubly transitive. 
\end{remark}

\begin{remark}
Assume that ${\rm supp}(\varphi^*\ell)$ consists of $d$ points. 
Let $P_1=P$, let $P_2=\tau(P)$ and let $G_2=\tau G_P \tau^{-1}$, where $\tau \in G_Q \setminus \{1\}$.  
It is inferred that conditions in \cite[Theorem 1]{fukasawa3} are satisfied for the $4$-tuple $(G_1, G_2, P_1, P_2)$. 
Therefore, $X$ has a birational embedding with two inner Galois points. 
Note that $G_1$ fixes $P_1$ and $G_2$ fixes $P_2$. 
In this case, the group $G=\langle G_{P_1}, G_{P_2} \rangle$ ($=G_P \ltimes G_Q$) has been classified by Korchm\'{a}ros, Lia, and Timpanella \cite{klt}. 
It is noted that this birational embedding  is different from the one given by the triple $(G_P, G_Q, P)$ in the case of Theorem \ref{main3} (b). 
In fact, the embedding of $d=4$ described in Theorem \ref{main3} does not admit two inner Galois points.   
\end{remark}

\begin{remark}
Assume that $p>0$ and $p$ divides $d-1$. 
In this case also, we can prove that the defining equation of $C$ is of the form $(\sum_{m \ | \ p^i-1} \alpha_i x^{p^i})^m+h(y)=0$, where $d-1=p^em$, $m$ is not divisible by $p$, $\alpha_e\alpha_0 \ne 0$, and $h(y)$ is a polynomial of degree $d$, up to a projective equivalence. 
\end{remark}

\begin{center} {\bf Acknowledgements} \end{center} 
The author is grateful to Doctor Kazuki Higashine for helpful discussions.

\end{document}